\DeclareMathOperator*{\argmin}{argmin}
\DeclareMathOperator*{\vol}{Vol}
\DeclarePairedDelimiter{\bigO}{\mathcal O(}{)}
\def\fmin{\ensuremath{f^{\text{min}}}}
\def\Keps{\ensuremath{K_{\epsilon}}}
\def\Lmax{\ensuremath{L^{\text{max}}}}
\def\R{\ensuremath{\mathbb R}}
\def\Rn{\ensuremath{\mathbb R^n}}
\def\Rm{\ensuremath{\mathbb R^m}}
\def\eps{\epsilon}
\newtheorem{assumption}{Assumption}
\newtheorem{proposition}{Proposition}
\newtheorem{definition}{Definition}
\newtheorem{lemma}{Lemma}
\newtheorem{theorem}{Theorem}
\newtheorem{remark}{Remark}
\newcommand{\email}[1]{E-mail: \href{mailto:#1}{\texttt{#1}}}
\begin{document}
\thispagestyle{plain}

\setcounter{page}{1}

{\centering
{\LARGE \bfseries Worst-Case Complexity of High-Order Algorithms for Pareto-Front Reconstruction}

\bigskip\bigskip
Andrea Cristofari$^*$, Marianna De Santis$^\dag$, Stefano Lucidi$^\ddag$, Giampaolo Liuzzi$^\ddag$
\bigskip

}

\begin{center}
\small{\noindent$^*$Department of Civil Engineering and Computer Science Engineering \\
University of Rome ``Tor Vergata'' \\
Via del Politecnico, 1, 00133 Rome, Italy \\
\email{andrea.cristofari@uniroma2.it} \\
\bigskip
$^\dag$Department of Information Engineering \\
Universit\`a degli studi di Firenze \\
Via di Santa Marta 3, 50139 Firenze, Italy \\
\email{marianna.desantis@unifi.it} \\
\bigskip
$^\ddag$Department of Computer, Control and Management Engineering \\
Sapienza Universit\`a di Roma \\
Via Ariosto 25, 00185 Roma, Italy \\
\email{liuzzi@diag.uniroma1.it}, \email{lucidi@diag.uniroma1.it} \\
}
\end{center}

\bigskip\par\bigskip\par
\noindent \textbf{Abstract.}
In this paper, we are concerned with a worst-case complexity analysis of a-posteriori algorithms for unconstrained multiobjective optimization.
Specifically, we propose an algorithmic framework that generates sets of points by means of $p$th-order models regularized with a power $p+1$ of the norm of the step.
Through a tailored search procedure, several trial points are generated at each iteration and they can be added to the current set if a decrease is obtained for at least one objective function.
Building upon this idea, we devise two algorithmic versions: at every iteration, the first tries to update all points in the current set, while the second tries to update only one point.
Under Lipschitz continuity of the derivatives of the objective functions, we derive worst-case complexity bounds for both versions.
For the first one, we show that at most $\bigO{\epsilon^{-m(p+1)/p}}$ iterations are needed to generate a set where all points are $\epsilon$-approximate Pareto-stationary, with $m$ being the number of objective functions, requiring at most $\bigO{|X(\eps)|\epsilon^{-m(p+1)/p}}$ function evaluations, where $X(\eps)$ is the largest set of points computed by the algorithm. Additionally, at most $\bigO{\epsilon^{-(p+1)/p}}$ iterations are needed to generate at least one $\epsilon$-approximate Pareto-stationary point, requiring at most $\bigO{|X(\eps)|\epsilon^{-(p+1)/p}}$ function evaluations.
For the second version, we get $\bigO{\epsilon^{-m(p+1)/p}}$ worst-case complexity bounds on the number of iterations and function evaluations for generating at least one $\epsilon$-approximate Pareto-stationary point.
Our results align with those for single objective optimization and generalize those obtained for methods that produce a single Pareto-stationary point.

\bigskip\par
\noindent \textbf{Keywords.} Multiobjective optimization. High-order methods. Worst-case iteration complexity.

\bigskip\par
\noindent \textbf{MSC2000 subject classifications.} 90C29. 90C30. 65Y20.

\section{Introduction}
In this paper, we consider unconstrained multiobjective  optimization problems of the following form:
\begin{equation}\label{prob}
\min_{x \in \R^n} \,F(x) := (f_1(x), \ldots, f_m(x))^T,
\end{equation}
with (possibly non-convex) objective functions $f_i \colon \Rn \to \R$, $i=1,\ldots,m$ and $m \ge 2$.

For the solution of Problem~\eqref{prob}, where we want to simultaneously minimize a number of conflicting objective functions, a common approach
is represented by the so-called descent methods. They are characterized by the use of suitable directions that allow all objective functions to decrease when using an appropriate stepsize.
Such a paradigm generates a sequence of points converging, over appropriate subsequences, to a Pareto-stationary point.

In particular, for first-order descent methods, a steepest descent method was originally proposed in~\cite{fliege:2000}, computing a descent direction by minimizing a first-order model with second-order regularization. When the gradients of the objective functions are Lipschitz continuous, the stationarity violation was proved in~\cite{fliege:2019,lapucci:2024} to converge to zero at a $\bigO{k^{-1/2}}$ rate and provide a worst-case iteration complexity of $\bigO{\epsilon^{-2}}$ to produce an approximate solution with precision $\epsilon > 0$.
Extending that approach to the use of higher order derivatives, a descent algorithm was proposed in~\cite{calderon:2022} where, at each iteration, a descent direction can be computed by approximately minimizing a $p$th-order model regularized with a power $p+1$ of the norm of the step.
Assuming that the $p$th derivatives of the objective functions are H{\"o}lder continuous with parameter $\beta \in [0,1]$, it was shown a worst-case iteration complexity of $\bigO{\epsilon^{-(p+\beta)/(p+\beta-1)}}$ for generating an approximate solution with precision $\epsilon>0$, thus yielding to a $\bigO{\epsilon^{-(p+1)/p}}$ worst-case complexity when the $p$th derivatives of the objective function are Lipschitz continuous (i.e., $\beta=1$).
These complexity bounds align with those obtained for single-objective optimization (see, e.g.,~\cite{birgin:2017,cartis:2011,cartis:2019,grapiglia:2015,nesterov:2006}).

The above mentioned schemes for multiobjective optimization are able to produce a single Pareto-stationary point. This may be unsatisfactory when the goal is to approximate the entire Pareto set rather than computing just a single point.
For this reason, several algorithms that build sequences of sets were recently proposed in the literature~\cite{cocchi2020augmented,cocchi2021pareto,cocchi:2020,custodio:2021,custodio2011direct,lapucci:2023,lapucci2024effective,liuzzi2025worst,mohammadi2024trust}.

Following this trend, here we design and analyze an algorithmic approach that builds sequences of sets of points
by means of high-order regularized models.
In the proposed scheme, we are given a set of points $X_k$ at every iteration $k$ and, chosen any point in $X_k$ (possibly all), we try to produce new points to be added to the set by minimizing an appropriate model where the $p$th-order Taylor approximation is regularized by a power $p+1$ of the norm of the step.
Specifically, to select appropriate values for the regularization parameters, we generate several trial points using a search procedure that repeatedly increases the regularization parameters until a dominating point is obtained,  that is, such that all the objective values are lower than the initial ones.
A distinguishing feature of our approach is that not all the trial points computed in that search phase are discarded. Instead, they can be added to the current set of points if they make \textit{at least one objective function} decrease.

For this paradigm, we consider two algorithmic versions: at every iteration $k$, the first version tries to update all points at its disposal in the current set $X_k$, while the second version chooses only one point from $X_k$.
Loosely speaking, the two versions meet two different needs: the first one aims to produce a more accurate approximation of the Pareto front but requires a major computational effort in practice. Conversely, the second strategy is more efficient from a computational standpoint.

Under Lipschitz continuity of the derivatives of the objective functions,
for both versions we derive worst-case iteration and function evaluation complexity bounds for generating a set of approximate Pareto-stationary points. 
To the best of the authors' knowledge, this represents the first attempt to provide a worst-case analysis for algorithms that generate sets of points and use high order derivatives.

\subsection{Main Contributions}
Let us summarize the main contributions of the paper.

\begin{enumerate}
    \item We devise an algorithmic framework for Pareto front reconstruction in unconstrained multiobjective optimization using high-order regularized models. Specifically, we give two algorithmic versions which differ in how the current set of points is updated at each iteration: while the first version tries to update each point of the set, the second one---a lighter version---tries to update only one arbitrarily chosen point of the set. As discussed earlier, these two versions are expected to achieve different levels of accuracy in the Pareto front reconstruction and different efficiency.
    \item For both algorithmic versions, we provide a worst-case iteration and function evaluation complexity analysis for generating a set containing approximate Pareto-stationary points. Our results show that, for the algorithm that updates all points at its disposal at every iteration, we generate a set where all points are $\epsilon$-approximate Pareto-stationary after at most $\bigO{\epsilon^{-m(p+1)/p}}$ iterations, requiring at most $\bigO{|X(\eps)|\epsilon^{-m(p+1)/p}}$ function evaluations, where $X(\eps)$ is the largest set of points computed by the algorithm. Moreover, to generate at least one $\epsilon$-approximate Pareto-stationary point, we show a worst-case complexity of $\bigO{\epsilon^{-(p+1)/p}}$ for the number of iterations and $\bigO{|X(\eps)|\epsilon^{-(p+1)/p}}$ for the number of function evaluations.  For the light algorithm, which is much less expensive, only the latter type of complexity bounds can be established (i.e., to generate at least one $\epsilon$-approximate Pareto-stationary point), which are $\bigO{\epsilon^{-m(p+1)/p}}$ for both the number of iterations and the number of function evaluations.
\end{enumerate}
Moreover, we investigate the use of approximate conditions to solve the (possibly non-convex) high-order regularized models inexactly, thus mitigating the difficulty connected with their practical solution. Our analysis shows that the complexity bounds remain the same 
(up to the constant factors) when approximate minimizers are used.

\subsection{Notation and contents}
Given a vector $v \in \R^n$, we denote by $v_i$ the $i$th entry of $v$, while $\|v\|$ denotes the Euclidean norm of $v$.
Given two vectors $u,v\in\R^m$, then $u \le v$ ($u < v$) if and only if $u_i\le v_i$ ($u_i < v_i$) for all $i=1,\dots,m$.
Given two vectors $x,y\in\R^n$, we say that $x$ dominates $y$, and we write $x\prec y$, if $F(x)\le F(y)$ and $F(x)\neq F(y)$. Given a set of points $X\subseteq\R^n$, we define $F(X) := \{z\in\R^m:\ z = F(x)\ \text{for}\ x\in X\}$.
The volume of a set $X\subseteq\R^n$ is denoted by $\vol(X)$.
By $\mathbf{1}$ we denote the vector of all ones of appropriate dimensions, typically $\mathbf{1}\in\R^m$.
We indicate by $\Rm_+$ the set of vectors in $\Rm$ having all components strictly positive.

The rest of the paper is organized as follows.
In Section~\ref{sec:preliminaries}, we recall some concepts on multiobjective optimization problems and point out some results on quadratically regularized models.
In Section~\ref{sec:high_order_model}, we analyze high-order regularized models.
In Section~\ref{sec:HOP}, we describe the proposed algorithmic approach and carry out a worst-case complexity analysis, using both exact and inexact minimizers of the regularized model.
In Section~\ref{sec:LHOP}, we describe a light version of the algorithm and give worst-case complexity bounds. In Section~\ref{sec:examples}, we report some numerical examples to show the behavior of the proposed algorithms. Finally, some conclusions are drawn in Section~\ref{sec:conclusions}.

\section{Preliminaries}\label{sec:preliminaries}
In this section we introduce some preliminary material. First, let us recall the definitions of Pareto-optimality and -stationarity for Problem~\eqref{prob}.

\begin{definition}[Pareto optimality]
A point $x^* \in \Rn$ is said to be Pareto optimal (or efficient) for Problem~\eqref{prob} if
there does not exist $x \in \Rn$ such that $x\prec x^*$, that is $F(x) \le F(x^*)$,
with $F(x) \ne F(x^*)$.
\end{definition}

\begin{proposition}\label{prop:stat1}
Let $x^*\in \R^n$ be a Pareto optimal point for Problem~\eqref{prob}. Then, for any $d \in \R^n$, an index $j \in \{1,\ldots,m\}$ exists such that
 \begin{equation*}
{\nabla f_j}(x^*)^{T}d \ge 0.
 \end{equation*}
\end{proposition}

\begin{definition}[Pareto-stationary point]\label{def:stazionario}
A point $x^*\in \R^n$ is Pareto-stationary for Problem~\eqref{prob} if, for any $d \in \R^n$, an index $j \in \{1,\ldots,m\}$ exists such that
 \begin{equation*}
{\nabla f_j}(x^*)^{T}d \ge 0.
 \end{equation*}
\end{definition}

Now, starting from the above results, we can also give the definition and a characterization of efficient sets.

\begin{definition}[Efficient set]
A set of points ${\cal E}\subseteq\R^n$ is said to be an efficient set if every point $x\in\cal E$ is a Pareto optimal point of Problem~\eqref{prob}.
\end{definition}

\begin{proposition}\label{prop:statEffSet}
Let $\mathcal{E}\subseteq \R^n$ be an efficient set for Problem~\eqref{prob}. Then, for any $x\in \mathcal{E}$ and $d \in \R^n$,  there exists an index $i \in \{1,\ldots,m\}$ such that $ \nabla f_i (x)^T d \ge 0$.  
\end{proposition}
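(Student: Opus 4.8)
The plan is to derive Proposition~\ref{prop:statEffSet} as an almost immediate consequence of Proposition~\ref{prop:stat1}, since the two statements are structurally identical except that Proposition~\ref{prop:statEffSet} quantifies over every point of an efficient set rather than over a single Pareto optimum. First I would recall the definition of an efficient set: a set $\mathcal{E}\subseteq\R^n$ is efficient precisely when each of its members is a Pareto optimum point for Problem~\eqref{prob}. Thus the hypothesis that $\mathcal{E}$ is efficient gives, by definition, that every $x\in\mathcal{E}$ is individually Pareto optimal.

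The central step is then to fix an arbitrary $x\in\mathcal{E}$ and observe that $x$ is a Pareto optimum point. Proposition~\ref{prop:stat1} applies verbatim to this $x$: for any direction $d\in\R^n$, there exists an index $i\in\{1,\ldots,m\}$ (depending in general on both $x$ and $d$) such that $\nabla f_i(x)^T d \ge 0$. Since $x$ was chosen arbitrarily in $\mathcal{E}$, the conclusion holds uniformly for all $x\in\mathcal{E}$ and all $d\in\R^n$, which is exactly the assertion of the proposition.

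In terms of ordering, I would carry this out in two short moves: (i) unfold the definition of efficiency to reduce the set-level statement to a pointwise one, and (ii) invoke Proposition~\ref{prop:stat1} at the fixed point $x$. No additional machinery—no Lipschitz continuity, no regularized models, no stepsize analysis—is required, since the result is purely a consequence of first-order necessary conditions for Pareto optimality already established in Proposition~\ref{prop:stat1}.

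As for difficulty, I do not expect a genuine obstacle here; the only point deserving care is the logical handling of the quantifiers. The index $i$ is permitted to depend on the pair $(x,d)$, and the statement should not be misread as claiming a single $i$ that works simultaneously for all directions or all points. Once this dependence is acknowledged, the proof is essentially a restatement of Proposition~\ref{prop:stat1} applied pointwise, and the main content lies entirely in the already-proved necessary condition for a single Pareto optimum.
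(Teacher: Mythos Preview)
Your proposal is correct and matches the paper's treatment: the paper states Proposition~\ref{prop:statEffSet} without proof, evidently regarding it as an immediate consequence of the definition of efficient set together with Proposition~\ref{prop:stat1}, which is exactly the two-step argument you give. There is nothing to add.
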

Stronger conditions to characterize the efficient set $\mathcal{E}$ were also given in~\cite{cristofari:2024}.
Here, according to Proposition~\ref{prop:statEffSet}, we give the following definition of a Pareto-stationary set. 
\begin{definition}[Pareto-stationary set]
Let  $S\subseteq \R^n$ be a non-empty set. 
We say that $S$ is a Pareto-stationary set for Problem~\eqref{prob}
if, for any $x\in S$ and $d \in \R^n$, there exists an index $i \in \{1,\ldots,m\}$ such that $ \nabla f_i (x)^Td \ge 0$.  
\end{definition}

To measure the quality of a set of points in the image space (i.e., $\R^m$) of a multiobjective problem, it is common to use the hypervolume indicator. Before giving its definition, we need the following assumption.

\begin{assumption}\label{ass:boundedness}
For all $i=1,\dots,m$, and given $x_0\in\R^n$, it holds that
\begin{align*}
-\infty < f_i^{\min} & := \inf\{f_i(x) \colon x\in\R^n \}, \\
+\infty > f_i^{\max} & := \sup\{ f_i(x) \colon x\in\R^n \text{ and } x_0\not\prec x\}.  
\end{align*}
Let us also denote $f_{\max} := (f_1^{\max},\dots,f_m^{\max})^T\in\R^m$ and $f_{\min} := (f_1^{\min},\dots,f_m^{\min})^T\in\R^m$.
\end{assumption}

\begin{definition}[Hypervolume indicator~{\cite[Definition 3.1]{custodio:2021}}]\label{HI_definition} The hypervolume indicator for some set $A\subset\R^m$ and a reference point $\rho \in \R^m$
that is dominated by all the points in $A$ is defined as
\[
HI(A) := \vol\left(\{b \in \R^m : b \le \rho\ \text{and}\ \exists a \in A : a \le b\}\right) = \vol\left(\bigcup_{a\in A}[a,\rho]\right),
\]
where $[a,\rho] = \{y\in\R^m:\ a_i\le y_i\le \rho_i,\ i=1,\dots,m\}$.
\end{definition}

A key result is expressed in the following lemma. It relates the increase of the hypervolume in the image space when, given a set of points, a new point is added which is nondominated with respect to all other points of the set.

\begin{lemma}[{\cite[Lemma 3.1]{custodio:2021}}]\label{HIincrease_gen}
Let $Y\subseteq\R^n$, $x \in \Rn \setminus Y$, and $\nu>0$ such that 
\[
F(x) \not > F(y) - \nu\mathbf{1} \quad \forall y\in Y.
\]
Let the reference point be $\rho = f_{\max}+\alpha\mathbf{1}$ (with $\alpha>0$ being sufficiently large). Then,
\[
HI(F(Y\cup\{x\})) - HI(F(Y)) \ge \nu^m. 
\]
\end{lemma}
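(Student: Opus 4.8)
The plan is to rewrite the hypervolume gain as the volume of the portion of the box $[F(y),\rho]$ that the previous boxes do not yet cover, and then to exhibit an explicit axis-aligned sub-box of side length $\nu$ lying entirely in that uncovered region. Since $F(Y\cup\{y\})=F(Y)\cup\{F(y)\}$, the union defining the hypervolume of the augmented set is $\bigl(\bigcup_{x\in Y}[F(x),\rho]\bigr)\cup[F(y),\rho]$, so the increment is exactly the extra volume contributed by $[F(y),\rho]$:
\[
HI(F(Y\cup\{y\}))-HI(F(Y))=\vol\Bigl([F(y),\rho]\setminus\bigcup_{x\in Y}[F(x),\rho]\Bigr).
\]
It therefore suffices to bound this set difference from below by $\nu^m$.

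To do so, I would introduce the half-open box
\[
B:=\{z\in\R^m:\ f_i(y)\le z_i<f_i(y)+\nu,\ i=1,\dots,m\},
\]
whose volume is precisely $\nu^m$, and show that $B\subseteq[F(y),\rho]$ while $B\cap[F(x),\rho]=\emptyset$ for every $x\in Y$. The containment $B\subseteq[F(y),\rho]$ is immediate on the lower side because $z_i\ge f_i(y)$, while on the upper side it follows from $z_i<f_i(y)+\nu\le\rho_i$, which is exactly what the hypothesis ``$\alpha$ sufficiently large'' guarantees (it ensures $f_i(y)+\nu\le f_i^{\max}+\alpha=\rho_i$ for all $i$).

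The crux is the disjointness, and this is where the assumption $F(y)\not> F(x)-\nu\mathbf{1}$ is used. Unfolding the componentwise-all interpretation of $>$, this condition means that for each $x\in Y$ there is an index $i$ with $f_i(y)\le f_i(x)-\nu$, i.e.\ $f_i(x)\ge f_i(y)+\nu$. For any $z\in B$ we then get $z_i<f_i(y)+\nu\le f_i(x)$ in that same coordinate, so the membership constraint $z_i\ge f_i(x)$ for $[F(x),\rho]$ is violated; hence $B\cap[F(x),\rho]=\emptyset$. Combining the two facts, $B$ is contained in the uncovered region, so the displayed set difference has volume at least $\vol(B)=\nu^m$, which is the claim.

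The only delicate point is the treatment of the boundary: with closed boxes the faces $\{z_i=f_i(x)\}$ could meet and one would have to discard a measure-zero set (or invoke that $Y$ is finite/countable). Using the half-open box $B$ avoids this, since the strict inequality $z_i<f_i(y)+\nu$ yields a strict separation $z_i<f_i(x)$ in the critical coordinate. I expect that careful bookkeeping of strict versus non-strict inequalities is the main thing to get right; the containment and the volume computation are otherwise routine.
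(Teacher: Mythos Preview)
The paper does not prove this lemma; it is quoted verbatim from \cite{custodio:2021} and invoked as a black box. Your argument is correct and is essentially the standard proof of this fact: rewrite the hypervolume increment as $\vol\bigl([F(y),\rho]\setminus\bigcup_{x\in Y}[F(x),\rho]\bigr)$, then exhibit the cube $B$ of side $\nu$ anchored at $F(y)$ and use the hypothesis to find, for each $x\in Y$, a coordinate in which $B$ lies strictly below $F(x)$. Your use of a half-open box to sidestep boundary overlaps is a clean touch, and your reading of $F(y)\not>F(x)-\nu\mathbf{1}$ as ``there exists $i$ with $f_i(y)\le f_i(x)-\nu$'' matches the paper's componentwise convention for $>$.

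One small remark: as written in the paper the hypothesis ``$y\in Y$'' is almost certainly a typo (taking $x=y$ would force $0\le-\nu$, so the premise is vacuous); the intended reading, consistent with how the lemma is applied in Proposition~\ref{prop:HI_increase} and Remark~\ref{rem:HI_increase}, is that $y$ is a \emph{new} point being adjoined to $Y$. Your proof implicitly adopts that reading, which is the right call.
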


Note that, under Assumption~\ref{ass:boundedness}, given the efficient set $\mathcal{E}\subseteq \R^n$ of Problem~\eqref{prob}, $\overline{HI}\in\R$ exists such that
\begin{equation}\label{HI}
HI (F(\mathcal{E}))\le \overline{HI}<+\infty.
\end{equation}

\subsection{Quadratically regularized models}\label{sec:quad_reg_model}
In this subsection, we give some results on quadratically regularized models applied to Problem~\eqref{prob}, laying the groundwork for the proposed algorithmic framework, which extends those concepts to higher order models dealing with sets of points.

To begin with, given $x \in \R^n$, consider the following problem:
\begin{equation}\label{1st_order_model}
\min_{v \in \R^n}{\max_{i=1,\ldots,m}{\nabla f_i(x)^T v + \frac12 \|v\|^2}}.
\end{equation}
We see that~\eqref{1st_order_model}
consists of a first-order term with quadratic regularization.
It is a strongly convex problem and has a unique optimal solution, which will be denoted by $v(x)$, that is,
\begin{equation}\label{vx}
v(x) \in \argmin_{v \in \R^n}{\max_{i=1,\ldots,m}{\nabla f_i(x)^T v + \frac12 \|v\|^2}}.
\end{equation}
From~\cite{fliege:2000}, we get the following result.
\begin{proposition}[{\cite{fliege:2000}}]\label{prop:stat_1st_order}
Given $x \in \R^n$,the following holds:
\begin{enumerate}[label=(\roman*)]
\item the optimal value of Problem~\eqref{1st_order_model} is non-positive,
\item $x^*$ is Pareto-stationary if and only if $\|v(x^*)\|= 0$, \label{prop:stat_1st_order_pareto}
\item the mapping $x\to v(x)$ is continuous.
\end{enumerate}
\end{proposition}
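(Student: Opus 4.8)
The plan is to exploit that, since the quadratic regularization $\frac12\|v\|^2$ is common to all $m$ terms, the objective of~\eqref{1st_order_model} can be rewritten as
\[
\phi_x(v) := \frac12\|v\|^2 + \max_{i=1,\ldots,m}\nabla f_i(x)^T v,
\]
which is the sum of a $1$-strongly convex quadratic and a pointwise maximum of finitely many linear functions. Hence $\phi_x$ is $1$-strongly convex and coercive, so $v(x)$ is well defined and unique, as already asserted. For item~(i) I would simply evaluate $\phi_x$ at $v=0$, obtaining $\phi_x(0)=0$; since $v(x)$ minimizes $\phi_x$, the optimal value satisfies $\phi_x(v(x))\le\phi_x(0)=0$.

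For item~(ii) I would argue directly from Definition~\ref{def:stazionario}, using that $x^*$ is Pareto-stationary if and only if $\max_i \nabla f_i(x^*)^T d\ge 0$ for every $d\in\R^n$. If this holds, then $\phi_{x^*}(v)\ge\frac12\|v\|^2\ge 0$ for all $v$, and since $\phi_{x^*}(0)=0$, uniqueness of the minimizer forces $v(x^*)=0$. Conversely, if $x^*$ is not Pareto-stationary, there is a direction $d$ with $c:=-\max_i\nabla f_i(x^*)^T d>0$; evaluating along the ray $v=td$, $t>0$, gives $\phi_{x^*}(td)=\frac12 t^2\|d\|^2-tc$, which is strictly negative for all small $t>0$. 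This contradicts item~(i) combined with $v(x^*)=0$ (which would make the optimal value equal to $0$). Thus $\|v(x^*)\|=0$ if and only if $x^*$ is Pareto-stationary.

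The substantive part is item~(iii), the continuity of $x\mapsto v(x)$, and it is where I expect the real work. The plan is a two-point strong-convexity estimate: since $\phi_x$ is $1$-strongly convex with minimizer $v(x)$, one has $\phi_x(v)-\phi_x(v(x))\ge\frac12\|v-v(x)\|^2$ for every $v$. Applying this with $v=v(y)$, performing the symmetric step with $x$ and $y$ interchanged, and adding the two inequalities yields
\[
\|v(x)-v(y)\|^2\le\bigl[\phi_x(v(y))-\phi_y(v(y))\bigr]+\bigl[\phi_y(v(x))-\phi_x(v(x))\bigr].
\]
Each bracket is controlled by $\max_i\|\nabla f_i(x)-\nabla f_i(y)\|$ times the norm of the corresponding minimizer, because $\phi_x$ and $\phi_y$ differ only in their linear part and $|\max_i a_i^T v-\max_i b_i^T v|\le\max_i\|a_i-b_i\|\,\|v\|$.

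Finally, I would bound the minimizers locally: from $\phi_x(v(x))\le 0$ one gets $\frac12\|v(x)\|^2\le-\max_i\nabla f_i(x)^T v(x)\le\max_i\|\nabla f_i(x)\|\,\|v(x)\|$, hence $\|v(x)\|\le 2\max_i\|\nabla f_i(x)\|$, which stays bounded near any fixed $x$ by continuity of the gradients. Combining these pieces, as $y\to x$ the right-hand side of the displayed inequality vanishes, so $v(y)\to v(x)$. The main obstacle is organizing the estimates so that the a priori unknown minimizers $v(x)$ and $v(y)$ are controlled uniformly on a neighborhood of $x$; the local gradient bound is precisely what makes this step go through and closes the argument.
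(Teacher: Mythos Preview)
The paper does not supply its own proof of this proposition: it is stated with a citation to Fliege and Svaiter and no argument is given in the text. So there is nothing to compare against line by line; what matters is whether your proof is correct, and it is.

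Your treatment of items~(i) and~(ii) is the standard one and exactly what one would expect: evaluate at $v=0$ for~(i), and for~(ii) use strong convexity and uniqueness of the minimizer together with the defining property of Pareto-stationarity. These are essentially the arguments in the original reference.

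For item~(iii) your approach is actually somewhat sharper than the classical route. The original proof (and most textbook presentations) obtain continuity of $x\mapsto v(x)$ from a general parametric-optimization argument: the objective $\phi_x(v)$ depends continuously on $(x,v)$, the minimizer is unique by strong convexity, and one invokes a Berge-type maximum theorem or a compactness/subsequence argument to conclude continuity of the argmin. Your two-point strong-convexity estimate is more hands-on: it yields the quantitative bound
\[
\|v(x)-v(y)\|^2 \le \bigl(\|v(x)\|+\|v(y)\|\bigr)\max_{i}\|\nabla f_i(x)-\nabla f_i(y)\|,
\]
and together with the a priori bound $\|v(x)\|\le 2\max_i\|\nabla f_i(x)\|$ this gives not just continuity but an explicit modulus (in particular, local Lipschitz continuity of $v(\cdot)$ whenever the gradients are locally Lipschitz). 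The only implicit hypothesis you use is continuity of each $\nabla f_i$, which is standard in this setting and assumed throughout the paper. So your argument is correct, self-contained, and in fact delivers a bit more than the bare statement requires.
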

In view of the above proposition, we see that Problem~\eqref{1st_order_model} has a twofold role: it provides an amount of Pareto-stationarity violation at a given point $x$ and, if $v(x) \ne 0$, the latter can be used as a descent direction.

We also note that, if $m=1$, then $v(x) = -\nabla f_1(x)$, thus reducing to the anti-gradient direction for single-objective optimization.

Moreover, Problem~\eqref{1st_order_model} can be equivalently reformulated as
\begin{equation}\label{1st_order_model_eq}
\begin{split}
& \min_{(v,\alpha) \in \R^{n+1}}{\alpha + \frac12 \|v\|^2} \\
\text{s.t. } & \nabla f_i(x)^T v \le \alpha, \quad i = 1,\ldots,m.
\end{split}
\end{equation}
It is straightforward to verify that $v$ is an optimal solution of~\eqref{1st_order_model} if and only if there exists $\alpha \in \R$ such that $(v,\alpha)$ is an optimal solution of~\eqref{1st_order_model_eq}.
Moreover, for Problem~\eqref{1st_order_model_eq}, we can define the Lagrangian function
\[
L(v,\alpha,\lambda) = \alpha + \frac12 \|v\|^2 + \sum_{i=1}^m \lambda_i \Bigl(\nabla f_i(x)^T v - \alpha\Bigr)
\]
and give the following KKT conditions:
\begin{subequations}
\begin{align}
& v + \sum_{i=1}^m \lambda_i \nabla f_i(x) = 0, \label{kkt1_1st_order_model} \\
& \sum_{i=1}^m \lambda_i = 1, \label{kkt2_1st_order_model} \\
& \lambda_i (\nabla f_i(x)^T v - \alpha) = 0, \quad i = 1,\ldots,m, \label{kkt3_1st_order_model} \\
& \nabla f_i(x)^T v - \alpha \le 0, \quad i = 1,\ldots,m, \label{kkt4_1st_order_model} \\
& \lambda_i \ge 0, \quad i = 1,\ldots,m. \label{kkt5_1st_order_model}
\end{align}
\end{subequations}

Let us conclude this section by giving a key result which we will use for high-order models to analyze the first-order stationarity, expressed in terms of $\|v(x)\|$ in view of Proposition~\ref{prop:stat_1st_order}.

\begin{lemma}\label{lemma:dual}
Given $x \in \Rn$, let $v(x)$ be defined as in~\eqref{vx}.
For all $u \in \Rm$ such that $\sum_{i=1}^m u_i = 1$ and $u \ge 0$, we have that
\[
\|v(x)\| \le \biggl\|\sum_{i=1}^m u_i \nabla f_i(x)\biggr\|.
\]
\end{lemma}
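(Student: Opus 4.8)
The plan is to use the variational characterization of $v(x)$ together with the KKT conditions \eqref{kkt1_1st_order_model}--\eqref{kkt3_1st_order_model}. Write $\phi(v) := \max_{i} \nabla f_i(x)^T v + \tfrac12\|v\|^2$, so that $v(x) = \argmin_v \phi(v)$ by~\eqref{vx}. The first step is to evaluate $\phi$ at its minimizer. From the stationarity condition \eqref{kkt1_1st_order_model} we have $v(x) = -\sum_i \lambda_i \nabla f_i(x)$, with $\lambda$ lying in the unit simplex by \eqref{kkt2_1st_order_model} and \eqref{kkt5_1st_order_model}. Summing the complementarity conditions \eqref{kkt3_1st_order_model} over $i$ and using $\sum_i\lambda_i = 1$ gives $\sum_i \lambda_i \nabla f_i(x)^T v(x) = \alpha$; substituting \eqref{kkt1_1st_order_model} into the left-hand side then yields $-\|v(x)\|^2 = \alpha$. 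Since the optimal value of \eqref{1st_order_model_eq} is $\alpha + \tfrac12\|v(x)\|^2$ and coincides with that of \eqref{1st_order_model}, i.e.\ with $\phi(v(x))$, I obtain the identity $\phi(v(x)) = -\tfrac12\|v(x)\|^2$.

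Next I would bound $\phi$ from below for an arbitrary $u\ge 0$ with $\sum_{i=1}^m u_i = 1$. Since a maximum dominates any convex combination, $\max_i a_i \ge \sum_i u_i a_i$, we have, for every $v$,
\[
\phi(v) \ge \Bigl(\sum_{i=1}^m u_i\nabla f_i(x)\Bigr)^{\!T} v + \tfrac12\|v\|^2 .
\]
The right-hand side is a strictly convex quadratic in $v$, minimized at $v = -\sum_i u_i\nabla f_i(x)$ with minimum value $-\tfrac12\bigl\|\sum_i u_i \nabla f_i(x)\bigr\|^2$. Hence $\phi(v)\ge -\tfrac12\bigl\|\sum_i u_i \nabla f_i(x)\bigr\|^2$ for all $v$, and in particular at $v = v(x)$.

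Combining the two steps, $-\tfrac12\|v(x)\|^2 = \phi(v(x)) \ge -\tfrac12\bigl\|\sum_i u_i\nabla f_i(x)\bigr\|^2$, which after rearranging and taking square roots gives the claim. The only delicate point is the first step, namely the identity $\phi(v(x)) = -\tfrac12\|v(x)\|^2$: it hinges on correctly exploiting complementarity \eqref{kkt3_1st_order_model} to identify the value of the active $\max$, and on the equivalence between \eqref{1st_order_model} and \eqref{1st_order_model_eq}. Conceptually, this identity is precisely strong duality for the convex--concave saddle problem $\min_v \max_{u} \sum_i u_i\nabla f_i(x)^T v + \tfrac12\|v\|^2$ over the simplex, whose dual reads $\max_u\bigl(-\tfrac12\|\sum_i u_i\nabla f_i(x)\|^2\bigr)$; the lemma is then simply the statement that the dual optimum is at least as large as any feasible dual value. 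I would present the self-contained KKT computation and mention the minimax interpretation only as a motivating remark.
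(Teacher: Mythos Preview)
Your proof is correct and follows essentially the same duality-based route as the paper: both hinge on the KKT representation $v(x)=-\sum_i\bar\lambda_i\nabla f_i(x)$ and on comparing $-\tfrac12\|v(x)\|^2$ with $-\tfrac12\bigl\|\sum_i u_i\nabla f_i(x)\bigr\|^2$. The only cosmetic difference is that the paper explicitly writes down the dual problem~\eqref{stat_dual} and invokes strong duality to declare $\bar\lambda$ dual-optimal, whereas you compute the primal optimal value $\phi(v(x))=-\tfrac12\|v(x)\|^2$ via complementarity and then apply the weak-duality bound $\max_i a_i\ge\sum_i u_i a_i$; the content is identical.
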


\begin{proof}
Since $v(x)$ is the unique optimal solution of Problem~\eqref{1st_order_model}, then an optimal solution of~\eqref{1st_order_model_eq} is given by $(v(x),\alpha(v(x)))$, with $\alpha(v) := \max_{i=1,\ldots,m}{\nabla f_i(x)^T v}$.
Since~\eqref{1st_order_model_eq} is a linearly constrained problem, then the constraint qualification holds and 
$(v(x),\alpha(v(x)))$ satisfies the KKT conditions~\eqref{kkt1_1st_order_model}--\eqref{kkt5_1st_order_model} with a multiplier vector $\bar \lambda \in \Rm$ (see, e.g.,~\cite{bertsekas:1999}). In particular, from~\eqref{kkt1_1st_order_model}, \eqref{kkt2_1st_order_model} and~\eqref{kkt5_1st_order_model}, we can write
\begin{equation}\label{vx_kkt}
v(x) = -\sum_{i=1}^m \bar \lambda_i \nabla f_i(x), \quad \sum_{i=1}^m \bar \lambda_i = 1, \quad \bar \lambda \ge 0.
\end{equation}
Now, consider the dual of Problem~\eqref{1st_order_model_eq}, given by
\begin{equation}\label{stat_dual}
\begin{split}
& \max_{\lambda \in \Rm} -\frac12\biggr\|\sum_{i=1}^m \lambda_i \nabla f_i(x)\biggr\|^2 \\
\text{s.t. } & \sum_{i=1}^m \lambda_i = 1, \\
             & \lambda \ge 0.
\end{split}
\end{equation}
From strong duality,
it follows that $\bar \lambda$ is an optimal solution for the dual Problem~\eqref{stat_dual}.
Hence,
\[
-\biggr\|\sum_{i=1}^m \bar \lambda_i \nabla f_i(x)\biggr\|^2 \ge -\biggr\|\sum_{i=1}^m u_i \nabla f_i(x)\biggr\|^2 \quad \forall u \in \Rm \text{ such that } \sum_{i=1}^m u_i = 1 \text{ and } u \ge 0.
\]
Recalling~\eqref{vx_kkt}, the left-hand side of the above inequality is equal to $-\|v(x)\|^2$, yielding to the desired result.
\end{proof}

\section{High-order regularized models}\label{sec:high_order_model}
From an algorithmic perspective, a solution of Problem~\eqref{1st_order_model} can be used to obtain a descent direction for all objective functions (see Proposition~\ref{prop:stat_1st_order}).
Building upon this idea, a steepest descent method was defined in~\cite{fliege:2000} by moving, at each iteration $k$, from the current point $x_k$ along a direction $v(x_k)$ computed as in~\eqref{vx} and using a stepsize obtained by a line search procedure.
Under the assumption that $\nabla f_i$, $i=1,\ldots,m$, are Lipschitz continuous, a convergence rate of $\bigO{k^{-1/2}}$ for the first-order stationarity violation was proved for such a scheme in~\cite{fliege:2019}, leading to a worst-case iteration complexity of $\bigO{\epsilon^{-2}}$ to drive the first-order stationarity violation below a given $\epsilon>0$.
The same convergence rate and worst-case iteration complexity were then obtained in~\cite{lapucci:2024} for a broader class of first-order methods.

Since Problem~\eqref{1st_order_model} consists of a first-order term with quadratic regularization, it appears natural to define a more general framework using models of order $p \ge 1$ with regularization of order $p+1$, extending ideas from single-objective optimization~\cite{birgin:2017}.
Such an approach was investigated in~\cite{calderon:2022} under  H{\"o}lder continuity of the derivatives (thus including Lipschitz continuity as a special case). However, while the algorithm designed in~\cite{calderon:2022} is able to produce only a single Pareto-stationary point, here we want to define an algorithm able to approximate the Pareto front.

Let us assume that, for a given $p \ge 1$ and all $i=1,\ldots,m$,
the $p$th derivatives of $f_i$, denoted by $\nabla^p f_i$, are Lipschitz continuous with constant $L_i>0$. Namely,
\[
\|\nabla^p f_i(x)-\nabla^p f_i(y)\|_{[p]} \le L_i \|x-y\| \quad \forall x,y \in \Rn,
\]
where, using a standard notation (see, e.g., \cite{birgin:2017,cartis:2018}), 
$\|\cdot\|_{[p]}$ is the tensor norm recursively induced on the space of $p$th-order tensors, which is given by
\[
\|T\|_{[p]} := \max_{\|v_1\| = \dots = \|v_p\|=1} |T[v_1,\ldots,v_p]|,
\]
with $T[v_1, \cdots, v_j]$ indicating the tensor of order $p-j \ge 0$ resulting from the application of the $p$th-order tensor $T$ to the vectors $v_1,\ldots,v_j$.

Then, given a point $x \in \Rn$ and a regularization parameter vector $\sigma \in \Rm_+$, we can consider the following problem:
\begin{equation}\label{high_order_model}
\min_{s \in \R^n}{\max_{i=1,\ldots,m}{\sum_{j=1}^p \frac 1{j!} \nabla^j f_i(x)[s]^j + \frac{\sigma_i}{p!} \|s\|^{p+1}}},
\end{equation}
where $\nabla^j f_i(x)[s]^j$ stands for $\nabla^j f_i(x)$ applied $j$ times to the vector $s$.

To simplify the notation, given $x \in \Rn$ and $\sigma \in \Rm_+$, let us define
\begin{align*}
m^p_i(s;x,\sigma_i) & := \sum_{j=1}^p \frac 1{j!} \nabla^j f_i(x)[s]^j + \frac{\sigma_i}{p!} \|s\|^{p+1}, \quad i = 1,\ldots,m, \\
m^p(s;x,\sigma) & := \max_{i=1,\ldots,m}{m^p_i(s;x,\sigma_i)},
\end{align*}
so that Problem~\eqref{high_order_model} can be rewritten as
\[
\min_{s \in \R^n}{m^p(s;x,\sigma)}.
\]

We see that each $m^p_i(s;x,\sigma_i)$ is obtained by computing the Taylor series $T^p_i(x,s)$ of the function $f_i(x+s)$ at $x$ truncated at order $p$, then subtracting $f_i(x)$ (which is constant with respect to $s$) and adding a regularization term of order $p+1$. Namely,
\[
m^p_i(s;x,\sigma_i) = T^p_i(x,s) - f_i(x) + \frac{\sigma_i}{p!} \|s\|^{p+1}, \quad i = 1,\ldots,m,
\]
where
\[
T^p_i(x,s) := f_i(x) + \sum_{j=1}^p \frac 1{j!} \nabla^j f_i(x)[s]^j, \quad i = 1,\ldots,m.
\]

Note that, when $p=1$ in~\eqref{high_order_model}, we recover the strongly convex Problem~\eqref{1st_order_model} (up to a constant in the regularization terms). However, when $p>1$, Problem~\eqref{high_order_model} is no longer convex.
In the next result, we highlight that the optimal value of $m^p(;x,\sigma)$ with respect to $s$ cannot be positive.
\begin{proposition}\label{prop:sk}
Given $x \in \Rn$ and $\sigma \in \Rm_+$, let $\bar s$ be an optimal solution of Problem~\eqref{high_order_model}. Then,
\[
m^p(\bar s;x,\sigma) \le m^p(0;x,\sigma) = 0,
\]
that is, $m^p_i(\bar s;x,\sigma_i) \le m^p_i(0;x,\sigma_i) = 0$ for all $i = 1,\ldots,m$.
\end{proposition}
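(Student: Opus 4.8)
The plan is to observe that $s=0$ is an admissible competitor in the unconstrained minimization defining Problem~\eqref{high_order_model}, and that the model has been normalized so as to vanish there; optimality of $\bar s$ then does the rest. The whole argument hinges on two elementary facts: the feasibility of the origin and a direct evaluation of the model at $s=0$.

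First I would evaluate the model at the origin. For each $i$, every term $\frac{1}{j!}\nabla^j f_i(x)[0]^j$ of the truncated Taylor part vanishes, since applying a $j$th-order tensor to the zero vector gives zero, and the regularization term $\frac{\sigma_i}{p!}\|0\|^{p+1}$ is likewise zero. Hence $m^p_i(x,0)=0$ for all $i$, so that $m^p(x,0)=\max_{i=1,\ldots,m} m^p_i(x,0)=0$.

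Next, since $\bar s$ minimizes $m^p(x,\cdot)$ over all of $\Rn$ and $0\in\Rn$ is feasible, the defining inequality of a minimizer immediately gives $m^p(x,\bar s)\le m^p(x,0)=0$. To pass to the componentwise statement, I would simply unfold the maximum: the inequality $\max_{i=1,\ldots,m} m^p_i(x,\bar s)\le 0$ forces $m^p_i(x,\bar s)\le 0$ for each $i=1,\ldots,m$, which, together with the already-established $m^p_i(x,0)=0$, yields the full claim.

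I do not anticipate any real obstacle here: the result is an immediate consequence of $s=0$ being feasible and of the model being built to vanish at the origin (by subtracting $f_i(x)$ in the Taylor expansion and regularizing with $\|s\|^{p+1}$). The only step deserving a modicum of care is the evaluation at $s=0$, where one must note that all the polynomial terms drop out; no appeal to convexity, Lipschitz continuity, or even to the existence of $\bar s$ (which is assumed in the statement) is required.
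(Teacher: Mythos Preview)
Your proof is correct and matches the paper's approach exactly: the paper's own proof simply states that the result ``follows straightforwardly from the definition of Problem~\eqref{high_order_model},'' and you have merely spelled out those straightforward details (feasibility of $s=0$, vanishing of the model there, and unfolding the max).
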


\begin{proof}
It follows straightforwardly from the definition of Problem~\eqref{high_order_model}.
\end{proof}

Reasoning similarly as in Section~\ref{sec:quad_reg_model}, we can give an equivalent reformulation of Problem~\eqref{high_order_model} as
\begin{equation}\label{high_order_model_eq}
\begin{split}
& \min_{(s,t) \in \R^{n+1}}{t} \\
\text{s.t. } & m^p_i(s;x,\sigma_i) \le t, \quad i = 1,\ldots,m.
\end{split}
\end{equation}
For Problem~\eqref{high_order_model_eq}, we define the Lagrangian function as
\[
L(s,t,\lambda) = t + \sum_{i=1}^m \lambda_i (m^p_i(s;x,\sigma_i) - t),
\]
so that the KKT conditions can be expressed as follows:
\begin{subequations}
\begin{align}
& \sum_{i=1}^m \lambda_i \nabla_s m^p_i(s;x,\sigma_i) = 0, \label{kkt1} \\
& \sum_{i=1}^m \lambda_i = 1, \label{kkt2} \\
& \lambda_i (m^p_i(s;x,\sigma_i) - t) = 0, \quad i = 1,\ldots,m, \label{kkt3} \\
& m^p_i(s;x,\sigma_i) - t \le 0, \quad i = 1,\ldots,m, \label{kkt4} \\
& \lambda_i \ge 0, \quad i = 1,\ldots,m,  \label{kkt5}
\end{align}
\end{subequations}
where $\nabla_s m^p_i(s;x,\sigma_i)$ denotes the gradient of $m^p_i(s;x,\sigma_i)$ with respect to $s$.

In the following result, we point out that a constraint qualification holds for Problem~\eqref{high_order_model_eq}, so that the KKT conditions~\eqref{kkt1}--\eqref{kkt5} are indeed necessary for optimality.

\begin{proposition}\label{prop:kkt}
Given $x\in \Rn$ and $\sigma \in \Rm_+$, let $(s,t)$ be an optimal solution of Problem \eqref{high_order_model_eq}. Then, there exists $\lambda \in \Rm$ such that KKT conditions~\eqref{kkt1}--\eqref{kkt5} are satisfied.
\end{proposition}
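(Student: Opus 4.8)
My approach is to verify the Mangasarian--Fromovitz constraint qualification (MFCQ) at the given optimal solution $(s,t)$ of Problem~\eqref{high_order_model_eq}; once MFCQ holds, the existence of a multiplier vector $\lambda \in \Rm$ satisfying the KKT conditions~\eqref{kkt1}--\eqref{kkt5} follows from standard first-order optimality theory for smooth nonlinear programs (see, e.g.,~\cite{bertsekas:1999}). Writing the constraints as $g_i(s,t) := m_i^p(x,s) - t \le 0$ for $i=1,\ldots,m$, I first need each $g_i$ to be continuously differentiable. This is immediate because $m_i^p(x,\cdot)$ is the sum of the degree-$p$ Taylor term $\sum_{j=1}^p \frac{1}{j!}\nabla^j f_i(x)[s]^j$, a polynomial in $s$, and the regularization term $\frac{\sigma_i}{p!}\|s\|^{p+1}$, which is $C^1$ on $\Rn$ since $p+1 \ge 2$ (its gradient $\frac{\sigma_i(p+1)}{p!}\|s\|^{p-1}s$ is continuous everywhere, including at $s=0$).

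Next I would compute the constraint gradients. Since $t$ enters each $g_i$ linearly with coefficient $-1$, we have
\[
\nabla g_i(s,t) = \bigl(\nabla_s m_i^p(x,s),\, -1\bigr) \in \Rn \times \R.
\]
MFCQ at $(s,t)$ requires a direction $(d_s,d_t) \in \R^{n+1}$ with $\nabla g_i(s,t)^T (d_s,d_t) < 0$ for every index $i$ in the active set $I(s,t) := \{i : g_i(s,t) = 0\}$. The crucial observation is that the epigraph variable $t$ supplies such a direction uniformly: taking $(d_s,d_t) = (0,1)$ gives
\[
\nabla g_i(s,t)^T (0,1) = -1 < 0
\]
for all $i$, and in particular for every active $i$. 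Hence MFCQ holds at $(s,t)$---indeed at every feasible point---regardless of the (possibly nonconvex) shape of the maps $m_i^p(x,\cdot)$ when $p>1$.

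I do not anticipate a genuine obstacle: the only point requiring care is the smoothness of the $\|s\|^{p+1}$ term at the origin, which is harmless for $p \ge 1$, and the remainder of the argument is purely structural. This is the nonconvex analogue of the reasoning used for the quadratic model in the proof of Lemma~\ref{lemma:dual}: there, linearity of the constraints guaranteed a constraint qualification, whereas here the minimax reformulation through the auxiliary variable $t$ yields MFCQ for free, since $(0,1)$ strictly relaxes all constraints simultaneously. Feeding MFCQ into the standard KKT necessity theorem then produces a multiplier $\lambda \in \Rm$ satisfying~\eqref{kkt1}--\eqref{kkt5}, completing the proof.
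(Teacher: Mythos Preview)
Your argument is correct. Verifying MFCQ via the epigraph direction $(0,1)$ is a clean way to secure a constraint qualification, and the smoothness check on $\|s\|^{p+1}$ is handled properly.

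The paper takes a closely related but mechanically different route: it invokes the Fritz--John conditions at $(s,t)$ and then rules out the abnormal case $\lambda_0=0$ by noting that the stationarity condition in the $t$-variable reads $\sum_{i=1}^m \lambda_i = \lambda_0$, so $\lambda_0=0$ together with $\lambda_i\ge 0$ forces all multipliers to vanish, contradicting non-triviality. Both arguments hinge on exactly the same structural fact---each constraint gradient has $t$-component $-1$---but package it differently: you exhibit a strict descent direction to certify MFCQ, while the paper shows the abnormal Fritz--John multiplier configuration is infeasible. Your approach has the advantage of naming the operative constraint qualification explicitly and making clear it holds at \emph{every} feasible point, not just the optimum; the paper's approach is slightly more self-contained in that it does not appeal to MFCQ by name.
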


\begin{proof}
Since $(s,t)$ is an optimal solution for Problem~\eqref{high_order_model_eq}, using the Fritz-John conditions~\cite{bertsekas:1999}, there exist $\lambda_0,\lambda_1,\ldots,\lambda_m \in \R$ such that
\begin{subequations}
\begin{align}
& \sum_{i=1}^m \lambda_i \nabla_s m^p_i(s;x,\sigma_i) = 0, \label{fj1} \\
& \sum_{i=1}^m \lambda_i = \lambda_0, \label{fj2} \\
& \lambda_i (m^p_i(s;x,\sigma_i) - t) = 0, \quad i = 1,\ldots,m, \label{fj3} \\
& m^p_i(s;x,\sigma_i) - t \le 0, \quad i = 1,\ldots,m, \label{fj4} \\
& \lambda_i \ge 0, \quad i = 0,1,\ldots,m,  \label{fj5} \\
& (\lambda_0,\lambda_1,\ldots,\lambda_m) \ne (0,0,\ldots,0). \label{fj6}
\end{align}
\end{subequations}
To prove the result, we need to show that $\lambda_0 > 0$ (so that, by dividing each $\lambda_i$ by $\lambda_0$, the KKT conditions~\eqref{kkt1}--\eqref{kkt5} hold).
Reasoning by contradiction, assume that $\lambda_0 = 0$.
In view of~\eqref{fj2} and~\eqref{fj5}, it follows that
$\lambda_i = 0$ for all $i = 0,1,\ldots,m$, thus contradicting~\eqref{fj6}.
\end{proof}

Let us finally state useful properties of $f_1,\ldots,f_m$
coming from known results on functions with Lipschitz continuous $p$th-order derivatives~\cite{birgin:2017,cartis:2018}:
for all $i = 1,\ldots,m$, it holds that
\begin{gather}
|f_i(x+s) - T_i^p(x,s)| \le \frac{L_i}{p!} \|s\|^{p+1}, \quad \forall x,s \in \Rn, \label{ub_lips} \\
\|\nabla f_i(x+s) - \nabla_s T_i^p(x,s)\| \le \frac{L_i}{(p-1)!} \|s\|^p, \quad \forall x,s \in \Rn. \label{ineq_lips}
\end{gather}
Moreover, in the remainder of the paper, we will denote the $m$-dimensional vector
\[
L := \begin{bmatrix} L_1 & \ldots & L_m \end{bmatrix}^T
\]
and use the value $\Lmax$ defined as follows:
\begin{equation}\label{Lmax}
\Lmax := \max_{i=1,\ldots,m}{L_i}.
\end{equation}

\section{HOP: a High-Order algorithm for Pareto-front reconstruction}\label{sec:HOP}
In this section, we describe the proposed \textit{High-Order algorithm for Pareto-front reconstruction}, that is, Algorithm HOP.
At each iteration $k$, we are given a set $X_k$ of mutually nondominated points and we try to update $X_k$ by means of the \textit{Regularized Search} (RS) procedure.
Specifically, for any non-Pareto-stationary point $x_k$ in $X_k$, the RS procedure generates several trial points by computing (possibly approximate) minimizers of the regularized model $m^p(s;x_k,\sigma)$ with respect to $s$ using different choices for the regularization parameters $\sigma$. More precisely, we execute a while loop where we repeatedly increase the regularization parameters associated with the objective functions that do not satisfy a sufficient decrease condition in the trial points. The while loop stops only when all the objective functions satisfy the sufficient decrease condition, then returning $s(x_k)$ such that $x_k+s(x_k) \prec x_k$.
Notably, the trial points produced in the while loop are not necessarily discarded.
Instead, any trial point that makes at least one objective function decrease is considered a candidate to be included in the set $X_{k+1}$ for the next iteration.

\begin{algorithm}[ht!]
\caption{High-Order algorithm for Pareto-front reconstruction (HOP)}
\begin{algorithmic}[1]
\State {\bf given} $X_0 \subseteq \R^n$
\State set $k=0$
\While{$X_k$ is not Pareto-stationary}
\State set $\tilde X_k = X_k$
\ForAll{$x_k\in X_k$}
\If{$x_k$ is not Pareto-stationary}
\State compute $Y(x_k)$, $s(x_k)$, $\sigma(x_k)$ by \texttt{Regularized Search}$(x_k,X_k)$ 
\State set $\tilde X_k \leftarrow \tilde X_k\cup  Y(x_k)$
\EndIf
\EndFor
\State set $X_{k+1}$ as the nondominated points in  $\tilde X_k$
\State set $k\leftarrow k+1$
\EndWhile
\end{algorithmic}
\end{algorithm}

\begin{algorithm}[ht!]
\caption{\texttt{Regularized Search}$(x,X)$}
\begin{algorithmic}[1]
\Statex {{\bf input:} $x,X$;} \ {\bf output:} $Y,s,\sigma$
\State {\bf given} $\eta \in (0,1)$, $\delta \in (0,1)$ and $0<(\sigma_l)_i\leq(\sigma_u)_i<\infty$, $i=1,\dots,m$
\State set $j=0$, $Y^j=X$ and choose $\sigma^j\in[\sigma_l,\sigma_u]$
\State compute a (possibly approximate) minimizer $s^j$ of $m^p(s;x,\sigma^j)$
\While{$F(x +s^j) \not \le F(x) -\eta \,\dfrac{\|s^j\|^{p+1}}{p!}\sigma^j$}
\If{$F(x +s^j) \not > F(y) -\eta \,\dfrac{\|s^j\|^{p+1}}{p!}\sigma^j \text{ for all } y\in Y^j$} \label{RS_test}
\State set $Y^{j+1} = Y^j\cup\{x+s^j\}$
\Else
\State set $Y^{j+1} = Y^j$
\EndIf
\State set $\sigma^{j+1}$ such that, for all $i = 1,\ldots,m$,
        \[
        \displaystyle{(\sigma^{j+1})_i = \begin{cases}(\sigma^j)_i/ \delta \quad & \text{if }  f_i(x+s^j)> f_i(x) -\eta\dfrac{\|s^j\|^{p+1}}{p!}(\sigma^j)_i \\
        (\sigma^j)_i \quad & \text{otherwise}
        \end{cases}}
        \]
\State compute a (possibly approximate) minimizer $s^{j+1}$ of $m^p(s;x,\sigma^{j+1})$
\State set $j\leftarrow j+1$
\EndWhile
\State set $Y^{j+1} = Y^j\cup\{x+s^j\}$
\State {\bf return} $Y=Y^{j+1}$, $s =s^j$ and $\sigma=\sigma^j$
\end{algorithmic}
\end{algorithm}

Observe that RS procedure initially chooses the regularization parameter vector in $[\sigma_l, \sigma_u]$, with $\sigma_l$ and $\sigma_u$ being vectors of finite positive real numbers. In the remainder of the paper, we denote 
\begin{equation*}
\begin{split}
    \sigma^{\max}_u & := \max_{i=1,\dots,m}(\sigma_u)_i,\\
    \sigma^{\min}_l & := \min_{i=1,\dots,m}(\sigma_l)_i.
\end{split}
\end{equation*}

\subsection{Using exact minimizers of the regularized model}
First, we assume for simplicity that, for any $\sigma \in \Rm_+$, each $s^j$ computed in the RS procedure is a global minimizer of the regularized model $m^p(s;x,\sigma^j)$ with respect to $s$, that is,
\begin{equation}\label{s_global_min}
s^j\in\argmin_{s\in\R^n}m^p(s;x,\sigma^j).
\end{equation}
We now show how to relate the norm of an optimal solution $s$ of $\min_{s\in\R^n} m^p(s;x,\sigma)$ to $\|v(x+s)\|$, the latter measuring the amount of first-order stationarity violation at $x+s$ according to Proposition~\ref{prop:stat_1st_order}.

\begin{proposition}\label{prop:g_ub}
Given $x \in \Rn$ and $\sigma \in \Rm_+$, let $\bar s \in \argmin_{s\in\R^n} m^p(s;x,\sigma)$. Then,
\[
\|v(x+\bar s)\| \le \biggl(\frac{(p+1)\max_{i=1,\ldots,m} \sigma_i}{p!} + \frac{\Lmax}{(p-1)!}\biggr) \|\bar s\|^p,
\]
where $v(\cdot)$ is defined as in~\eqref{vx}.
\end{proposition}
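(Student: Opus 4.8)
The plan is to connect the first-order stationarity measure $\|v(x+\bar s)\|$ at the trial point with the optimality conditions of the high-order model that produced $\bar s$. The bridge between these two objects is the pair of characterizations already established: Lemma~\ref{lemma:dual} bounds $\|v(y)\|$ by $\bigl\|\sum_i u_i \nabla f_i(y)\bigr\|$ for any convex-combination multiplier $u$, while Proposition~\ref{prop:kkt} guarantees that the minimizer $\bar s$ of the regularized model admits KKT multipliers. The idea is to feed the very same multipliers into Lemma~\ref{lemma:dual}. Concretely, I would first invoke Proposition~\ref{prop:kkt} at the optimal solution $(\bar s,\bar t)$ of the equivalent Problem~\eqref{high_order_model_eq} to obtain $\lambda\in\Rm$ with $\sum_{i=1}^m \lambda_i = 1$, $\lambda\ge 0$, and the stationarity condition $\sum_{i=1}^m \lambda_i \nabla_s m_i^p(x,\bar s) = 0$. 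Applying Lemma~\ref{lemma:dual} with the point $x+\bar s$ and $u=\lambda$ then yields
\[
\|v(x+\bar s)\| \le \Bigl\|\sum_{i=1}^m \lambda_i \nabla f_i(x+\bar s)\Bigr\|,
\]
so the whole task reduces to bounding this right-hand side by $\|\bar s\|^p$ times the stated constant.

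The key computational step is to split the quantity above as
\[
\sum_{i=1}^m \lambda_i\bigl(\nabla f_i(x+\bar s) - \nabla_s T_i^p(x,\bar s)\bigr) + \sum_{i=1}^m \lambda_i \nabla_s T_i^p(x,\bar s),
\]
and to evaluate the second sum through the KKT stationarity condition. Since $m_i^p(x,s) = T_i^p(x,s) - f_i(x) + \tfrac{\sigma_i}{p!}\|s\|^{p+1}$ and $\nabla_s \|s\|^{p+1} = (p+1)\|s\|^{p-1}s$, the identity $\sum_i \lambda_i \nabla_s m_i^p(x,\bar s)=0$ rearranges to
\[
\sum_{i=1}^m \lambda_i \nabla_s T_i^p(x,\bar s) = -\frac{p+1}{p!}\Bigl(\sum_{i=1}^m \lambda_i\sigma_i\Bigr)\|\bar s\|^{p-1}\bar s,
\]
whose norm equals $\tfrac{p+1}{p!}\bigl(\sum_i \lambda_i \sigma_i\bigr)\|\bar s\|^p$. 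Using $\lambda\ge 0$, $\sum_i\lambda_i=1$ and hence $\sum_i \lambda_i \sigma_i \le \max_{i} \sigma_i$ bounds this contribution by $\tfrac{(p+1)\max_{i}\sigma_i}{p!}\|\bar s\|^p$.

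For the first sum I would apply the triangle inequality together with the Lipschitz estimate~\eqref{ineq_lips}, which gives $\|\nabla f_i(x+\bar s)-\nabla_s T_i^p(x,\bar s)\| \le \tfrac{L_i}{(p-1)!}\|\bar s\|^p$; weighting by $\lambda_i$ and using $\sum_i\lambda_i=1$ with $L_i\le\Lmax$ produces the bound $\tfrac{\Lmax}{(p-1)!}\|\bar s\|^p$. Adding the two contributions and applying the triangle inequality once more gives exactly the claimed inequality. I do not expect a genuine obstacle, since every ingredient is already in place; the only points requiring care are the correct differentiation of the regularization term $\|s\|^{p+1}$ and the bookkeeping that keeps the multiplier vector $\lambda$ consistent between the KKT system of the model (Proposition~\ref{prop:kkt}) and the dual bound of Lemma~\ref{lemma:dual}.
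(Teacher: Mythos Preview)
Your proposal is correct and follows essentially the same approach as the paper: invoke Proposition~\ref{prop:kkt} to obtain KKT multipliers $\lambda$, feed them into Lemma~\ref{lemma:dual}, and split the resulting convex combination of gradients into a Taylor-gradient part (controlled via the KKT stationarity \eqref{kkt1}) and a Taylor-error part (controlled via~\eqref{ineq_lips}). Your direct rearrangement of \eqref{kkt1} to obtain $\sum_i \lambda_i \nabla_s T_i^p(x,\bar s) = -\tfrac{p+1}{p!}\bigl(\sum_i \lambda_i\sigma_i\bigr)\|\bar s\|^{p-1}\bar s$ is in fact slightly cleaner than the paper's add-and-subtract maneuver, but the argument is the same.
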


\begin{proof}
Let $t := m^p(\bar s;x,\sigma)$. Since $\bar s \in \argmin_{s\in\R^n} m^p(s;x,\sigma)$, it follows that $(\bar s,t)$ is an optimal solution of Problem~\eqref{high_order_model_eq}. Using Proposition~\ref{prop:kkt},
there exists a KKT multiplier vector $\lambda \in \Rm$ satisfying~\eqref{kkt1}--\eqref{kkt5} at $(\bar s,t)$.
Using Lemma~\ref{lemma:dual} and the triangle inequality, we have that
\begin{equation}\label{g_ub}
\begin{split}
\|v(x+\bar s)\| & \le \biggl\|\sum_{i=1}^m \lambda_i \nabla f_i(x+\bar s)\biggr\| \\
& = \biggl\|\sum_{i=1}^m \lambda_i (\nabla_s T_i^p(x,\bar s) + \nabla f_i(x+\bar s) - \nabla_s T_i^p(x,\bar s)) \biggr\| \\
& \le \biggl\|\sum_{i=1}^m \lambda_i \nabla_s T_i^p(x,\bar s)\biggr\| + \biggl\|\sum_{i=1}^m \lambda_i (\nabla f_i(x+\bar s) - \nabla_s T_i^p(x,\bar s)) \biggr\|.
\end{split}
\end{equation}
To upper bound the first norm in the last term of~\eqref{g_ub}, using the triangle inequality we can write
\begin{equation}\label{g_ub1}
\begin{split}
& \biggl\|\sum_{i=1}^m \lambda_i \nabla_s T_i^p(x,\bar s)\biggr\| =\\
& \biggl\|\sum_{i=1}^m \lambda_i \Bigr(\nabla_s T_i^p(x,\bar s) + \frac{(p+1)\sigma_i}{p!} \|\bar s\|^{p-1} \bar s - \frac{(p+1)\sigma_i}{p!} \|\bar s\|^{p-1} \bar s\Bigr)\biggr\| \le \\
& \biggl\|\sum_{i=1}^m \lambda_i \Bigl(\nabla_s T_i^p(x,\bar s) + \frac{(p+1)\sigma_i}{p!} \|\bar s\|^{p-1} \bar s\Bigr)\biggr\| + \sum_{i=1}^m \lambda_i \frac{(p+1)\sigma_i}{p!} \|\bar s\|^p = \\
& \biggl\|\sum_{i=1}^m \lambda_i(\nabla_s m_i^p(\bar s;x,\sigma_i)) \biggr\| + \sum_{i=1}^m \lambda_i \frac{(p+1)\sigma_i}{p!} \|\bar s\|^p \le \\
& \frac{(p+1)\max_{i=1,\ldots,m}\sigma_i}{p!} \|\bar s\|^p,
\end{split}
\end{equation}
where we have used~\eqref{kkt5} in the first inequality, while the last inequality follows from~\eqref{kkt1} and~\eqref{kkt2}.
To upper bound the second norm in the last term of~\eqref{g_ub}, using the triangle inequality we can write
\begin{equation}\label{g_ub2}
\begin{split}
\biggl\|\sum_{i=1}^m \lambda_i (\nabla f_i(x+\bar s) - \nabla_s T_i^p(x,\bar s)) \biggr\| & \le
\sum_{i=1}^m \lambda_i \Bigl\|\nabla f_i(x+\bar s) - \nabla_s T_i^p(x,\bar s)\Bigr\| \\
& \le \sum_{i=1}^m \lambda_i \frac{L_i}{(p-1)!} \|\bar s\|^p \\
& \le \frac{\Lmax}{(p-1)!} \|\bar s\|^p,
\end{split}
\end{equation}
where, in the first two inequalities, we have used~\eqref{kkt5} and~\eqref{ineq_lips}, respectively,
while the third inequality follows from~\eqref{kkt2} and the definition of $\Lmax$ given in~\eqref{Lmax}.
Combining~\eqref{g_ub}, \eqref{g_ub1} and~\eqref{g_ub2}, the desired result follows.
\end{proof}

\begin{remark}\label{rem:stats0}
Proposition~\ref{prop:g_ub} guarantees that, for any $\sigma \in \Rm_+$, a point $x\in\R^n$ is Pareto-stationary for Problem~\eqref{prob} if $0 \in \argmin_{s\in\R^n} m^p(s;x,\sigma)$ (see item~\ref{prop:stat_1st_order_pareto} of Proposition~\ref{prop:stat_1st_order}).
\end{remark}

The following Propositions~\ref{prop:sigma_ub}--\ref{prop:F_decr} show that the while loop in the RS procedure ends after solving a finite number of regularized models. In particular, for each non-Pareto-stationary point $x_k\in X_k$, we get $s(x_k)$ such that $x_k+s(x_k) \prec x_k$.

\begin{proposition}\label{prop:sigma_ub}
Given $x\in \R^n$, assume that $\sigma_i \ge (1-\eta)^{-1} L_i$, $i=1,\ldots,m$, with $\eta \in (0,1)$, and $\bar s \in \argmin_{s \in \Rn} m^p(s;x,\sigma)$. Then,
\[
F(x+\bar s) \le F(x) -\eta \,\dfrac{\|\bar s\|^{p+1}}{p!}\sigma.
\]
\end{proposition}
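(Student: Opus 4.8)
The plan is to prove the stated vector inequality coordinate by coordinate, that is, to establish $f_i(x+\bar s) \le f_i(x) - \eta\,\|\bar s\|^{p+1}\sigma_i/p!$ for each $i=1,\ldots,m$ separately, since the partial order $\le$ on $\R^m$ holds precisely when it holds in every component.

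For a fixed index $i$, the first step is to invoke the Lipschitz upper bound~\eqref{ub_lips} evaluated at the minimizer $\bar s$, which gives
\[
f_i(x+\bar s) \le T_i^p(x,\bar s) + \frac{L_i}{p!}\|\bar s\|^{p+1}.
\]
The second step is to eliminate the Taylor polynomial in favor of the model value: from the definition $m_i^p(x,\bar s) = T_i^p(x,\bar s) - f_i(x) + \frac{\sigma_i}{p!}\|\bar s\|^{p+1}$ I would substitute $T_i^p(x,\bar s) = f_i(x) + m_i^p(x,\bar s) - \frac{\sigma_i}{p!}\|\bar s\|^{p+1}$ into the previous display. The third step is to apply Proposition~\ref{prop:sk}, which ensures $m_i^p(x,\bar s) \le 0$ because $\bar s$ globally minimizes $m^p(x,\cdot)$ and hence each $m_i^p(x,\bar s) \le m_i^p(x,0) = 0$. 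Dropping this nonpositive term yields
\[
f_i(x+\bar s) \le f_i(x) - \frac{\sigma_i - L_i}{p!}\|\bar s\|^{p+1}.
\]

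The final step exploits the hypothesis on the regularization parameters: the condition $\sigma_i \ge (1-\eta)^{-1}L_i$ is equivalent to $(1-\eta)\sigma_i \ge L_i$, i.e.\ $\sigma_i - L_i \ge \eta\sigma_i$. Substituting this lower bound on the coefficient into the last display gives $f_i(x+\bar s) \le f_i(x) - \eta\,\|\bar s\|^{p+1}\sigma_i/p!$, and collecting the $m$ componentwise inequalities produces the claimed vector inequality.

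I do not anticipate a genuine obstacle here; the argument is a short chain combining the Lipschitz remainder estimate~\eqref{ub_lips}, the definition of the regularized model, and the nonpositivity of the model value at its minimizer. The only point requiring care---and the real content of the hypothesis---is recognizing that the threshold $(1-\eta)^{-1}L_i$ is exactly calibrated so that the excess coefficient $\sigma_i - L_i$ dominates $\eta\sigma_i$, which is precisely what converts the estimate into the desired sufficient-decrease form carrying the factor $\eta$.
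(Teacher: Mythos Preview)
Your proof is correct and uses the same ingredients as the paper---the Lipschitz remainder bound~\eqref{ub_lips}, Proposition~\ref{prop:sk}, and the threshold condition $\sigma_i\ge(1-\eta)^{-1}L_i$. The paper organizes these through the auxiliary ratio $\rho_i := (f_i(x)-f_i(x+\bar s))/(T_i^p(x,0)-T_i^p(x,\bar s))$ and shows $\rho_i\ge\eta$, whereas you argue more directly; this is only a presentational difference, and your route is arguably cleaner.
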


\begin{proof}
Without loss of generality, assume that $\bar s \ne 0$ (otherwise, $x$ would be Pareto-stationary for Problem~\eqref{prob}, according to Remark~\ref{rem:stats0}, and the result would trivially follow).
Consider any objective index $i \in \{1,\ldots,m\}$.
Define
\begin{equation}\label{rho}
\rho_i := \frac{f_i(x) - f_i(x+\bar s)}{T_i^p(x,0) - T_i^p(x,\bar s)}.
\end{equation}
Observe that
\[
T_i^p(x,0) = f_i(x)
\]
and
\begin{equation}\label{num_rho}
T_i^p(x,0) - T_i^p(x,\bar s) = f_i(x) - T_i^p(x,\bar s) = -m_i^p(\bar s;x,\sigma_i) + \frac{\sigma_i}{p!} \|\bar s\|^{p+1} \ge \frac{\sigma_i}{p!} \|\bar s\|^{p+1},
\end{equation}
where the last inequality follows from Proposition~\ref{prop:sk}.
Hence, we get
\[
1- \rho_i = \frac{f_i(x+\bar s) - T_i^p(x,\bar s)}{-m_i^p(\bar s;x,\sigma_i) + (\sigma_i/p!) \|\bar s\|^{p+1}} \le \frac{f_i(x+\bar s) - T_i^p(x,\bar s)}{(\sigma_i/p!) \|\bar s\|^{p+1}}.
\]
In the last term, the numerator can be upper bounded by $(L_i/p!) \|\bar s\|^{p+1}$ using~\eqref{ub_lips}, leading to
\[
1- \rho_i \le \frac{L_i}{\sigma_i},
\]
Since, by hypothesis, we have $\sigma_i \ge L_i/(1-\eta)$, with $1-\eta>0$, it follows that $1-\rho_i \le 1-\eta$, that is, $\rho_i \ge \eta$.
Namely, recalling~\eqref{rho} and~\eqref{num_rho}, we have that
\begin{equation*}
\begin{split}
f_i(x) - f_i(x+\bar s) \ge \eta \frac{\sigma_i}{p!} \|\bar s\|^{p+1},
\end{split}
\end{equation*}
where the last inequality follows from Proposition~\ref{prop:sk} and the fact that $\eta > 0$.
\end{proof}

\begin{proposition}\label{prop:F_decr}
Assume that each $s^j$ in the RS procedure satisfies~\eqref{s_global_min}.
Given a non-Pareto-stationary point $x_k \in X_k$ generated at iteration $k$ of Algorithm HOP, the RS procedure computes $\sigma(x_k) \le \max\{(\delta(1-\eta))^{-1} L, \sigma_u\}$ such that
\[
F(x_k+s(x_k)) \le F(x_k) -\eta \,\dfrac{\|s(x_k)\|^{p+1}}{p!}\sigma(x_k).
\]
\end{proposition}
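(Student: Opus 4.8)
The plan is to analyze the RS procedure one objective at a time, exploiting the fact that the per-objective sufficient decrease in Proposition~\ref{prop:sigma_ub} is really established index by index. First I would revisit the proof of Proposition~\ref{prop:sigma_ub} and record the scalar consequence it actually produces: for any fixed index $i$, whenever $\sigma_i \ge (1-\eta)^{-1}L_i$ the global minimizer $\bar s$ of $m^p(x,\cdot)$ satisfies $f_i(x+\bar s) \le f_i(x) - \eta\frac{\|\bar s\|^{p+1}}{p!}\sigma_i$. Read contrapositively, this says that if at some inner iteration $j$ the $i$th scalar test fails, i.e. $f_i(x+s^j) > f_i(x) - \eta\frac{\|s^j\|^{p+1}}{p!}(\sigma^j)_i$ (exactly the event that triggers a division of the $i$th parameter by $\delta$), then necessarily $(\sigma^j)_i < (1-\eta)^{-1}L_i$.

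Using this, I would bound each component of the regularization vector along the inner iterations by induction on $j$, the claim being $(\sigma^j)_i \le \max\{(\sigma_u)_i, (\delta(1-\eta))^{-1}L_i\}$. The base case holds since $(\sigma^0)_i \le (\sigma_u)_i$. In the inductive step a component is either left unchanged (and the bound is preserved) or divided by $\delta$; the latter occurs only when $(\sigma^j)_i < (1-\eta)^{-1}L_i$ by the previous paragraph, so after the update $(\sigma^{j+1})_i = (\sigma^j)_i/\delta < (\delta(1-\eta))^{-1}L_i$. Taking the componentwise maximum over $i$ then gives the asserted bound $\sigma(x_k) \le \max\{(\delta(1-\eta))^{-1}L, \sigma_u\}$ on the vector $\sigma(x_k)=\sigma^j$ returned at exit.

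To establish termination I would note that every execution of the while body strictly increases at least one component by the factor $1/\delta>1$: while the loop condition holds the vector decrease fails, so some scalar test is violated and the corresponding $(\sigma^j)_i$ is divided by $\delta$. Since each component is bounded above by the quantity just derived and grows geometrically whenever updated, each component can be updated only finitely many times, whence only finitely many inner iterations occur. Moreover, once every component satisfies $(\sigma^j)_i \ge (1-\eta)^{-1}L_i$, Proposition~\ref{prop:sigma_ub} yields the full vector inequality $F(x+s^j) \le F(x) - \eta\frac{\|s^j\|^{p+1}}{p!}\sigma^j$, so the loop must exit no later than this point. At exit the loop condition is false, which is precisely the claimed decrease $F(x_k+s(x_k)) \le F(x_k) - \eta\frac{\|s(x_k)\|^{p+1}}{p!}\sigma(x_k)$.

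The main obstacle, and the key idea, is recognizing that Proposition~\ref{prop:sigma_ub}, although phrased jointly over all objectives, can be invoked one coordinate at a time; this decoupling is exactly what simultaneously delivers the per-component upper bound on $\sigma(x_k)$ and the finite-termination argument, because the update rule modifies precisely those coordinates whose scalar test fails. As a minor point I would note that non-Pareto-stationarity of $x_k$ forces each global minimizer $s^j$ to be nonzero (by Remark~\ref{rem:stats0}), so $\|s(x_k)\|>0$ and the decrease is in fact strict, giving $x_k+s(x_k)\prec x_k$.
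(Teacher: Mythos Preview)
Your proposal is correct and follows essentially the same approach as the paper, which compresses the argument into a single sentence (``It follows from Proposition~\ref{prop:sigma_ub} and the instructions of the RS procedure''). What you have done is spell out in detail the per-coordinate reading of Proposition~\ref{prop:sigma_ub} and the induction on inner iterations that the paper leaves implicit; this elaboration is sound and matches the intended reasoning.
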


\begin{proof}
It follows from Proposition~\ref{prop:sigma_ub} and the instructions of the RS procedure.
\end{proof}

\begin{remark}\label{rem:sigma}
For any iteration $k$ and non-Pareto-stationary point $x_k \in X_k$, it follows from Proposition~\ref{prop:F_decr} and the instructions of the RS procedure that
\begin{equation}\label{sigmamax}
(\sigma(x_k))_i \le \sigma^{\max} := \max\{(\delta(1-\eta))^{-1} \Lmax, \sigma^{\max}_u\}, \quad i=1,\ldots,m.
\end{equation}
\end{remark}

\begin{remark}\label{rem:nf}
The maximum number of function evaluations $n_F$
required by the RS procedure at every iteration is upper bounded by $1$ plus the maximum number of times we increase the regularization parameter for each objective function $f_i$, $i=1,\ldots,m$. Hence, if each $s^j$ in the RS procedure satisfies~\eqref{s_global_min}, by Proposition~\ref{prop:F_decr} we have
\begin{equation}\label{nF}
n_F := \max_{i=1,\ldots,m} \biggl\lceil \frac 1{\log\delta}\log\biggl(\frac{(\sigma_l)_i}{\max\{(\delta(1-\eta))^{-1}L_i,(\sigma_u)_i\}}\biggr) \biggr\rceil + 1.
\end{equation}
\end{remark}

Now we show that, at every iteration $k$, there is no point $x\in X_k$ such that $x\prec x_k+s(x_k)$, that is, new nondominated points can be added to the current set $X_k$.

\begin{proposition}\label{prop:dirunc}
Assume that each $s^j$ in the RS procedure satisfies~\eqref{s_global_min}.
Given a non-Pareto-stationary point $x_k \in X_k$
generated at iteration $k$ of Algorithm HOP, we have that
\[
F(x_k + s(x_k))\not > F(y)-\eta \dfrac{\|s(x_k)\|^{p+1}}{p!}\sigma(x_k), \quad \forall y\in X_k.
\]
\end{proposition}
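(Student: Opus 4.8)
The plan is to argue by contradiction, combining the sufficient decrease guaranteed by Proposition~\ref{prop:F_decr} with the fact that $X_k$ is a set of mutually nondominated points (an invariant maintained by the update rule of Algorithm HOP, which sets $X_{k+1}$ to be the nondominated points of $\tilde X_k$). Fix an arbitrary $y\in X_k$ and suppose, contrary to the claim, that the strict inequality holds in every component, i.e.
\[
F(x_k+s(x_k)) > F(y) -\eta\,\frac{\|s(x_k)\|^{p+1}}{p!}\sigma(x_k).
\]

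First I would invoke Proposition~\ref{prop:F_decr}, which applies since $x_k$ is non-Pareto-stationary and each $s^j$ in the RS procedure satisfies~\eqref{s_global_min}. This gives, for every $i=1,\ldots,m$,
\[
F_i(x_k+s(x_k)) \le F_i(x_k) -\eta\,\frac{\|s(x_k)\|^{p+1}}{p!}(\sigma(x_k))_i.
\]
The key observation is that the regularization term is identical in the assumed lower bound and in this upper bound. Chaining the two componentwise inequalities, the common quantity $\eta\|s(x_k)\|^{p+1}(\sigma(x_k))_i/p!$ cancels, leaving
\[
F_i(y) < F_i(x_k), \qquad i=1,\ldots,m.
\]

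Hence $F(y) < F(x_k)$, so in particular $F(y)\le F(x_k)$ with $F(y)\neq F(x_k)$, that is $y\prec x_k$. This contradicts the nondominance of $X_k$, since no point $y\in X_k$ can dominate another point $x_k\in X_k$. I do not expect a genuine obstacle: the argument is a direct cancellation followed by an appeal to the standing invariant on $X_k$. The only point worth flagging is the degenerate case $y=x_k$, but this is absorbed by the same reasoning, as the derived relation $F_i(y)<F_i(x_k)$ then reads $F_i(x_k)<F_i(x_k)$, an immediate contradiction (equivalently, the assumed strict inequality already conflicts with Proposition~\ref{prop:F_decr} directly). Since $y\in X_k$ was arbitrary, the stated conclusion follows for all $y\in X_k$.
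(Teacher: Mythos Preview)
Your proof is correct and follows essentially the same approach as the paper's own proof: both combine the sufficient decrease from Proposition~\ref{prop:F_decr} with the mutual nondominance of $X_k$, cancelling the common regularization term to conclude. The only cosmetic difference is that you argue by contradiction whereas the paper argues directly, exhibiting for each $y\in X_k\setminus\{x_k\}$ an index $\hat\imath$ with $f_{\hat\imath}(y)>f_{\hat\imath}(x_k)$ and then chaining with~\eqref{eq:xk}.
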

\begin{proof}
Since $x_k \in X_k$ is non-Pareto-stationary for Problem~\eqref{prob}, by Proposition~\ref{prop:F_decr}
we can write
\begin{equation}\label{eq:xk}
  f_{i}(x_k + s(x_k)) \le f_{i}(x_k) -\eta \dfrac{\|s(x_k)\|^{p+1}}{p!}(\sigma(x_k))_{i}, \quad\forall i=1,\dots,m.
\end{equation}
Since, by the instructions of the algorithm, there is no point in $X_k\setminus\{x_k\}$ that do\-mi\-na\-tes $x_k$, then, for any given $y\in X_k\setminus\{x_k\}$,
an index ${\hat \imath}\in \{1,\dots,m\}$ exists such that
 \[
 f_{\hat \imath}(y) > f_{\hat \imath}(x_k),
 \]
which, by~\eqref{eq:xk}, leads to
\begin{equation}\label{fy_proof2}
f_{\hat \imath}(x_k + s(x_k)) < f_{\hat \imath}(y) -\eta \dfrac{\|s(x_k)\|^{p+1}}{p!}(\sigma(x_k))_{\hat\imath},\quad\forall y\in X_k\textit{}\setminus \{ x_k\}.
\end{equation}
The desired result is thus obtained by combining~\eqref{eq:xk} and~\eqref{fy_proof2}.
\end{proof}

Using Propositions~\ref{prop:g_ub} and~\ref{prop:F_decr}, we now show that, whenever a new point is added to the set $Y^j(x_k)$ during the while loop in the RS procedure,
the Pareto-stationarity violation can be related to the decrease obtained in one objective function. 
Furthermore, by Proposition~\ref{HIincrease_gen}, this decrease corresponds to an increase in the hypervolume of the set $F(Y^j(x_k))$.
This is formalized in the following result.

\begin{proposition}\label{prop:HI_increase}
Assume that each $s^j$ in the RS procedure satisfies~\eqref{s_global_min}.
Given a non-Pareto-stationary point $x_k \in X_k$ generated at iteration $k$ of Algorithm HOP, consider the $j$th iteration of the RS procedure invoked with $x=x_k$ and $X=X_k$. Let $\sigma^j(x_k) := \sigma^j$, $s^j(x_k) := s^j$ and
$Y^j(x_k) := Y^j$. If $Y^{j+1}(x_k) \neq Y^j(x_k)$,
using $v(\cdot)$  defined as in~\eqref{vx}, then
\begin{enumerate}[label=(\roman*)]
\item an index $\displaystyle{\hat \imath \in \{1,\ldots,m\}}$  exists such that 
\[\displaystyle{f_{\hat \imath}(x_k) - f_{\hat \imath}(x_k + s^j(x_k)) \ge c \|v(x_k + s^j(x_k))\|^{(p+1)/p}},\] \label{f_decrease}
\item $\displaystyle{HI(F(Y^{j+1}(x_k))) - HI(F(Y^j(x_k))) \ge c^m\|v(x_k+s^j(x_k))\|^{m(p+1)/p}}$,\label{HI_increase}
\end{enumerate}

where
\begin{equation}\label{c}
c := \Biggl(\eta \,\dfrac{\sigma_l^{\min}}{p!}\Biggr) \biggl(\frac{(p+1)\sigma^{\max}}{p!} + \frac{\Lmax}{(p-1)!}\biggr)^{-(p+1)/p}
\end{equation}
and $\sigma^{\max}$ is defined as in~\eqref{sigmamax}.
\end{proposition}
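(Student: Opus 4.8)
The plan is to derive both items from a single consequence of the acceptance test. Whenever $Y^{j+1}(x_k)\neq Y^j(x_k)$, the point $x_k+s^j$ was added precisely because the test on line~\ref{RS_test} succeeded, i.e.\ $F(x_k+s^j)\not> F(y)-\eta\,\|s^j\|^{p+1}\sigma^j/p!$ for every $y\in Y^j(x_k)$. Since $x_k\in X_k=Y^0\subseteq Y^j(x_k)$, I may specialize this to $y=x_k$, and by the meaning of $\not>$ obtain an index $\hat\imath$ with
\[
f_{\hat\imath}(x_k+s^j)\le f_{\hat\imath}(x_k)-\eta\,\frac{\|s^j\|^{p+1}}{p!}(\sigma^j)_{\hat\imath}.
\]
This one inequality is the engine for both parts. (The terminal addition after the while loop is covered analogously, the while-exit condition then providing a decrease in \emph{every} objective together with the nondomination established in Proposition~\ref{prop:dirunc}.)

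To reach item~\ref{f_decrease} I would first bound $(\sigma^j)_{\hat\imath}$ from below by $\sigma_l^{\min}$, which holds because the RS procedure only ever multiplies the regularization parameters by $1/\delta>1$ starting from $\sigma^0\in[\sigma_l,\sigma_u]$, so $(\sigma^j)_i\ge(\sigma_l)_i\ge\sigma_l^{\min}$ throughout. This gives a decrease of at least $\eta\,\sigma_l^{\min}\|s^j\|^{p+1}/p!$ in the $\hat\imath$th objective. It then remains to lower-bound $\|s^j\|^{p+1}$ by a power of the stationarity measure. I invoke Proposition~\ref{prop:g_ub} (applicable because $s^j$ is a global minimizer of $m^p(x_k,\cdot)$), which yields $\|v(x_k+s^j)\|\le C\|s^j\|^p$ with $C=(p+1)\max_i(\sigma^j)_i/p!+\Lmax/(p-1)!$; using $\max_i(\sigma^j)_i\le\sigma^{\max}$ at \emph{every} pass $j$ (this follows from the update rule together with Proposition~\ref{prop:sigma_ub}, not merely from Remark~\ref{rem:sigma} at termination) I replace $C$ by the constant appearing in~\eqref{c}. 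Inverting gives $\|s^j\|^{p+1}\ge C^{-(p+1)/p}\|v(x_k+s^j)\|^{(p+1)/p}$, and substituting reproduces exactly the constant $c$, proving item~\ref{f_decrease}.

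For item~\ref{HI_increase} I would apply Lemma~\ref{HIincrease_gen} with $x_k+s^j$ in the role of the new point, $Y^j(x_k)$ in the role of the set, and the scalar $\nu:=\eta\,\sigma_l^{\min}\|s^j\|^{p+1}/p!$. Its hypothesis $F(x_k+s^j)\not> F(y)-\nu\mathbf 1$ for all $y\in Y^j(x_k)$ follows from the acceptance test: since $\nu\le\eta\,\|s^j\|^{p+1}(\sigma^j)_i/p!$ componentwise, the index furnished by the test for each $y$ also witnesses the scalar relation. Here $\nu>0$ because non-Pareto-stationarity of $x_k$ forces $s^j\neq0$ by Remark~\ref{rem:stats0}. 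The lemma delivers $HI(F(Y^{j+1}(x_k)))-HI(F(Y^j(x_k)))\ge\nu^m$, and inserting the same lower bound on $\|s^j\|^{p+1}$ converts $\nu^m$ into $c^m\|v(x_k+s^j)\|^{m(p+1)/p}$.

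The main obstacle is the bookkeeping between the componentwise vector threshold $\eta\,\|s^j\|^{p+1}\sigma^j/p!$ used by the algorithm and the scalar threshold $\nu\mathbf 1$ required by Lemma~\ref{HIincrease_gen}. The key technical point is that uniformly shrinking the threshold to its smallest component via $\sigma_l^{\min}$ preserves the relaxed nondominance relation, so the existential index produced by the test remains valid for the scalar version; a secondary care point is confirming $\max_i(\sigma^j)_i\le\sigma^{\max}$ at every intermediate pass rather than only for the returned $\sigma(x_k)$, which is what makes the constant $c$ uniform across all iterations.
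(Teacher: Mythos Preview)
Your proposal is correct and follows essentially the same route as the paper: extract the index $\hat\imath$ from the acceptance test at $y=x_k$, lower-bound the regularization parameter by $\sigma_l^{\min}$, invoke Proposition~\ref{prop:g_ub} with the uniform bound $\sigma^{\max}$ to convert $\|s^j\|^{p+1}$ into a power of $\|v(x_k+s^j)\|$, and then apply Lemma~\ref{HIincrease_gen} for item~\ref{HI_increase}. In fact you are more explicit than the paper on two delicate points---that $\max_i(\sigma^j)_i\le\sigma^{\max}$ holds at \emph{every} intermediate pass (the paper's chain of inequalities tacitly uses this via Remark~\ref{rem:sigma}) and that the vector threshold in the test must be collapsed to the scalar $\nu=\eta\,\sigma_l^{\min}\|s^j\|^{p+1}/p!$ before Lemma~\ref{HIincrease_gen} applies---both of which are handled correctly in your argument.
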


\begin{proof}
Since $Y^{j+1}(x_k) \neq Y^j(x_k)$, from line~\ref{RS_test} of the RS procedure (using $y=x_k$), we have that $\hat \imath \in \{1,\ldots,m\}$  exists such that
\begin{equation}\label{H_lb_s}
f_{\hat \imath}(x_k) - f_{\hat \imath}(x_k + s^j(x_k)) \ge \eta \,\dfrac{\|s^j(x_k)\|^{p+1}}{p!}\sigma^j_{\hat \imath}(x_k) \ge \eta \,\dfrac{\sigma_l^{\min}}{p!} \|s^j(x_k)\|^{p+1}.
\end{equation}
Moreover, from Proposition~\ref{prop:g_ub} and Remark~\ref{rem:sigma}, we can write
\begin{equation*}
\begin{split}
\|v(x_k+s^j(x_k))\| & \le \biggl(\frac{(p+1)\max_{i=1,\ldots,m} (\sigma^j(x_k))_i}{p!} + \frac{\Lmax}{(p-1)!}\biggr) \|s^j(x_k)\|^{p} \\
 & = \biggl(\frac{(p+1)\max_{i=1,\ldots,m} (\sigma(x_k))_i}{p!} + \frac{\Lmax}{(p-1)!}\biggr) \|s^j(x_k)\|^{p} \\
& \le \biggl(\frac{(p+1)\sigma^{\max}}{p!} + \frac{\Lmax}{(p-1)!}\biggr) \|s^j(x_k)\|^{p},
\end{split}
\end{equation*}
that is,
\begin{equation*}
\|s^j(x_k)\|^{(p+1)} \ge
\biggl(\frac{(p+1)\sigma^{\max}}{p!} + \frac{\Lmax}{(p-1)!}\biggr)^{-(p+1)/p} \|v(x_k+s^j(x_k))\|^{(p+1)/p}.
\end{equation*}
Hence, item~\ref{f_decrease} follows by combining the above inequality with~\eqref{H_lb_s}, while item~\ref{HI_increase} follows from item~\ref{f_decrease} and the fact that, from Lemma~\ref{HIincrease_gen}, we have
\[
HI(F(Y^{j+1}(x_k))) - HI(F(Y^j(x_k))) \ge \left(f_{\hat \imath}(x_k) - f_{\hat \imath}(x_k + s^j(x_k))\right)^m. 
\]
\end{proof}

\begin{remark}\label{rem:HI_increase}
For any $k \ge 0$ and any non-Pareto-stationary point $x_k \in X_k$, from Propositions~\ref{prop:dirunc}--\ref{prop:HI_increase} the following holds:
\begin{enumerate}[label=(\roman*)]
\item $\displaystyle{f_i(x_k) - f_i(x_k + s(x_k)) \ge c \|v(x_k + s(x_k))\|^{(p+1)/p}}$ for all $i = 1,\ldots,m$, \label{f_decrease_x}
    \item $\displaystyle{HI(F(X_{k+1})) - HI(F(X_k)) \ge c^m(\|v(x_k+s(x_k))\|)^{m(p+1)/p}}$. \label{HI_increase_x}
\end{enumerate}
In particular, $x_k + s(x_k)$ ensures a decrease for all objective functions since it is the point produced in the last iteration of the RS procedure invoked with $x=x_k$ and $X=X_k$.
\end{remark}

We are now ready to derive worst-case complexity bounds for Algorithm HOP in two different scenarios.
Namely, we are interested in upper bounding the number of iterations and function evaluations needed to generate a set $X_k$ satisfying one of the following two properties:
\begin{itemize}
\item all points in $X_k$ are $\epsilon$-approximate Pareto-stationary, as shown in Theorem~\ref{th:Keps1};
\item  at least one point $x_k \in X_k$ is such that $x_k + s(x_k)$ is an $\epsilon$-approximate Pareto-stationary point, as shown in Theorem~\ref{th:Keps2}.
\end{itemize}

\begin{theorem}\label{th:Keps1}
Assume that each $s^j$ in the RS procedure satisfies~\eqref{s_global_min}.
Let $\eps>0$ and, for Algorithm HOP, define
\[
\Keps' := \{k \colon \|v(x_k+s(x_k))\| \ge \eps\ \text{for at least one}\ x_k\in X_k\}.
\]
Let $NF_{\eps}'$ be the number of functions evaluations performed by Algorithm HOP up to the first iteration $\bar k \notin K_{\epsilon}'$.
If Assumption~\ref{ass:boundedness} holds, then
\begin{equation*}
\begin{split}
|K_{\eps}'| & \le \left\lfloor\dfrac{\overline{HI}-HI_0}{c^m}\eps^{-m(p+1)/p}\right\rfloor,\\
NF_{\eps}' & \le n_F |X(\eps)| \left\lfloor\dfrac{\overline{HI}-HI_0}{c^m}\eps^{-m(p+1)/p}\right\rfloor,
\end{split}
\end{equation*}
where $\overline{HI}$, $n_F$ and $c$ are given in~\eqref{HI}, \eqref{nF} and~\eqref{c}, respectively,
while $|X(\eps)| := \max_{k = 0,\ldots, \bar k}|X_k|$ and $HI_0 := HI(F(X_0))$.
\end{theorem}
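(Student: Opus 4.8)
The plan is to turn the per-iteration hypervolume increase of Remark~\ref{rem:HI_increase} into a counting argument, using the fact that the total hypervolume is finite by~\eqref{HI}. The key observation is that whenever $k \in K_{\eps}'$, there is at least one $x_k \in X_k$ with $\|v(x_k+s(x_k))\| \ge \eps$; since $\|v(x_k+s(x_k))\| \ne 0$, this $x_k$ is non-Pareto-stationary by item~\ref{prop:stat_1st_order_pareto} of Proposition~\ref{prop:stat_1st_order}, so item~\ref{HI_increase_x} of Remark~\ref{rem:HI_increase} applies and gives
\[
HI(F(X_{k+1})) - HI(F(X_k)) \ge c^m \|v(x_k+s(x_k))\|^{m(p+1)/p} \ge c^m \eps^{m(p+1)/p}.
\]
Thus each iteration in $K_{\eps}'$ forces a fixed minimum growth of the hypervolume.

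For the iteration bound, I would first record that the hypervolume is monotonically non-decreasing along the algorithm: since $X_{k+1}$ is the set of nondominated points of $\tilde X_k \supseteq X_k$ and dominated points leave the hypervolume unchanged, we have $HI(F(X_{k+1})) = HI(F(\tilde X_k)) \ge HI(F(X_k))$. Moreover $HI(F(X_k)) \le \overline{HI}$ for every $k$, because each generated point is feasible and hence dominated by an efficient point, so $F(X_k)$ covers no more of the image space than $F(\mathcal{E})$. Summing the displayed increase over all $k \in K_{\eps}'$ and telescoping, the sum is bounded by the total increase since every omitted term is non-negative:
\[
|K_{\eps}'|\, c^m \eps^{m(p+1)/p} \le \sum_{k \in K_{\eps}'} \bigl(HI(F(X_{k+1})) - HI(F(X_k))\bigr) \le \overline{HI} - HI_0.
\]
Solving for $|K_{\eps}'|$ and taking the floor (it is a non-negative integer) yields the first bound; in particular its finiteness guarantees that the stopping index $\bar k \notin K_{\eps}'$ exists.

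For the function-evaluation bound, I would note that by definition of $\bar k$ the indices $0,1,\dots,\bar k-1$ all lie in $K_{\eps}'$, so $\bar k \le |K_{\eps}'|$. At each such iteration, Algorithm HOP invokes the RS procedure once per non-Pareto-stationary point of $X_k$, and by Remark~\ref{rem:nf} each invocation costs at most $n_F$ function evaluations; since at most $|X_k| \le |X(\eps)|$ points are processed, the cost of iteration $k$ is at most $n_F\,|X(\eps)|$. Summing over the $\bar k$ iterations and using $\bar k \le |K_{\eps}'|$ together with the first bound gives the claimed estimate on $NF_{\eps}'$.

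The main obstacle is not any single hard estimate—most of the analytic work is already packaged in Remark~\ref{rem:HI_increase}—but rather the bookkeeping that links the possibly non-consecutive index set $K_{\eps}'$, the stopping index $\bar k$, and the per-iteration evaluation count. The crux is justifying that the telescoped increase over $K_{\eps}'$ is dominated by $\overline{HI} - HI_0$ even when $K_{\eps}'$ is not an initial segment of the iterations; this is exactly where monotonicity of the hypervolume is used to discard the remaining (non-negative) terms. Once this is in place, the remaining steps are direct substitutions of the constants defined in~\eqref{HI}, \eqref{nF} and~\eqref{c}.
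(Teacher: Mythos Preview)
Your proposal is correct and follows essentially the same route as the paper: telescope the hypervolume increments, bound the total from above by $\overline{HI}-HI_0$, and from below by $|K_{\eps}'|\,c^m\eps^{m(p+1)/p}$ via Remark~\ref{rem:HI_increase}, then multiply by $n_F|X(\eps)|$ for the evaluation count. One minor slip worth fixing: from $\|v(x_k+s(x_k))\|\ne 0$, item~\ref{prop:stat_1st_order_pareto} of Proposition~\ref{prop:stat_1st_order} only tells you that $x_k+s(x_k)$ is non-Pareto-stationary, not $x_k$; the correct reason $x_k$ is non-Pareto-stationary is simply that $s(x_k)$ is defined, and in HOP the RS procedure is invoked only for non-Pareto-stationary points.
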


\begin{proof}
Let us consider an iteration $r$ and write
\begin{equation*}
\begin{split}
HI(F(X_{r+1})) - HI_0 & = \sum_{k=0}^r (HI(F(X_{k+1})) - HI(F(X_k))).
\end{split}
\end{equation*}
Taking the limit for $r\to\infty$ and using item~\ref{HI_increase_x} in Remark~\ref{rem:HI_increase}, for any non-Pareto-stationary $x_k \in X_k$ we can write
\begin{equation*}
\begin{split}
\overline{HI} - HI_0 & \ge \sum_{k=0}^{\infty} (HI(F(X_{k+1})) - HI(F(X_k))) \\
& \ge \sum_{k=0}^{\infty} c^m \|v(x_k+s(x_k))\|^{m(p+1)/p} \\
& \ge \sum_{k\in \Keps'} c^m \|v(x_k+s(x_k))\|^{m(p+1)/p} \\
& \ge \sum_{k\in \Keps'}c^m\eps^{m(p+1)/p} \\
& = |\Keps'| c^m\eps^{m(p+1)/p}, 
\end{split}
\end{equation*}
which proves the upper bound on $|K_{\epsilon}'|$.

To upper bound $NF_\eps'$, first note that $\bar k \le |K_\eps'|$. 
Furthermore, as pointed out in Remark~\ref{rem:nf}, the maximum number of function evaluations $n_F$ required by the RS procedure at each iteration is given as in~\eqref{nF}.
Then, the desired result follows by observing that, at any iteration $k$, the RS procedure is called at most $|X_k|$ times.
\end{proof}

\begin{theorem}\label{th:Keps2}
Assume that each $s^j$ in the RS procedure satisfies~\eqref{s_global_min}.
Let $\eps>0$ and, for Algorithm HOP, define
\[
\Keps'' := \{k \colon \|v(x_k+s(x_k))\| \ge \eps\ \text{for all}\ x_k\in X_k\}.
\] 
Let $NF_{\eps}''$ be the number of functions evaluations performed by Algorithm HOP up to the first iteration $\bar k \notin K_{\epsilon}''$.
If Assumption~\ref{ass:boundedness} holds, then
\begin{equation*}
\begin{split}
|\Keps''| &\le\left\lfloor \biggl(\frac{\min_{i=1,\ldots,m}(f_i(x_0) - \fmin_i)}c\biggr) \epsilon^{-(p+1)/p}\right\rfloor,\\
NF_{\eps}'' & \le n_F |X(\eps)| \left\lfloor \biggl(\frac{\min_{i=1,\ldots,m}(f_i(x_0) - \fmin_i)}c\biggr) \epsilon^{-(p+1)/p}\right\rfloor,
\end{split}
\end{equation*}
where $n_F$ and $c$ are given in~\eqref{nF} and~\eqref{c}, respectively, while $X(\eps)$ is defined as in Theorem~\ref{th:Keps1}.
\end{theorem}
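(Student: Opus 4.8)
The plan is to mirror the structure of the proof of Theorem~\ref{th:Keps1}, but to replace the hypervolume potential by a per-objective function-value potential; this is what accounts for the improved exponent $\eps^{-(p+1)/p}$ (linear in the guaranteed decrease) in place of $\eps^{-m(p+1)/p}$. For a fixed objective index $i\in\{1,\ldots,m\}$, I would introduce the potential $\phi_i(k) := \min_{x\in X_k} f_i(x)$ and show that it decreases by a fixed amount on every iteration in $\Keps''$ while never increasing on the remaining iterations; summing these decreases and using the lower bound $\phi_i(k)\ge \fmin_i$ from Assumption~\ref{ass:boundedness} then caps $|\Keps''|$.

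The key structural observation is that filtering $\tilde X_k$ down to its nondominated points to form $X_{k+1}$ cannot increase the minimum of any objective: since $\tilde X_k$ is finite, every one of its points is dominated by (or equals) some retained point, whose $i$th objective value is no larger. Consequently $\phi_i(k+1)=\min_{x\in \tilde X_k}f_i(x)$, and since $X_k\subseteq\tilde X_k$ this immediately gives $\phi_i(k+1)\le\phi_i(k)$. To obtain a strict decrease on $\Keps''$, I would pick a minimizer $x_k^*\in\argmin_{x\in X_k}f_i(x)$: for $k\in\Keps''$ all points of $X_k$ are non-Pareto-stationary with $\|v(x_k^*+s(x_k^*))\|\ge\eps$, so item~\ref{f_decrease_x} of Remark~\ref{rem:HI_increase} applied to $x_k^*$ yields $f_i(x_k^*+s(x_k^*))\le\phi_i(k)-c\,\eps^{(p+1)/p}$. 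Since $x_k^*+s(x_k^*)$ belongs to $\tilde X_k$, this gives $\phi_i(k+1)\le\phi_i(k)-c\,\eps^{(p+1)/p}$.

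With these two facts in hand, the counting argument is routine. Summing the strict decrease over $\Keps''\cap\{0,\ldots,K-1\}$ and invoking monotonicity on the complementary iterations gives, for every finite $K$,
\[
\fmin_i \le \phi_i(K) \le \phi_i(0) - \bigl|\Keps''\cap\{0,\ldots,K-1\}\bigr|\,c\,\eps^{(p+1)/p} \le f_i(x_0) - \bigl|\Keps''\cap\{0,\ldots,K-1\}\bigr|\,c\,\eps^{(p+1)/p},
\]
so letting $K\to\infty$ yields $|\Keps''|\le (f_i(x_0)-\fmin_i)\,c^{-1}\eps^{-(p+1)/p}$. As this holds for every $i$, taking the minimum over $i$ and using that $|\Keps''|$ is an integer produces the claimed floor bound. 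Finally, exactly as in Theorem~\ref{th:Keps1}, the function-evaluation bound follows by noting $\bar k\le|\Keps''|$, that at each iteration the RS procedure is invoked at most $|X_k|\le|X(\eps)|$ times, and that each invocation costs at most $n_F$ evaluations by Remark~\ref{rem:nf} and~\eqref{nF}.

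The main obstacle I anticipate is the set-bookkeeping in the monotonicity claim: one must argue carefully that neither the addition of trial points nor the nondominance filtering defining $X_{k+1}$ can raise any $\phi_i$, and that the decreasing point $x_k^*+s(x_k^*)$ is genuinely present in (or dominated within) $\tilde X_k$ so that it influences $\phi_i(k+1)$. A secondary technical point is handling possible Pareto-stationary members of $X_k$ on iterations \emph{outside} $\Keps''$ (so that $s(\cdot)$ is well defined and monotonicity still holds there), but this never concerns the iterations in $\Keps''$, where all points are non-Pareto-stationary by the very definition of $\Keps''$.
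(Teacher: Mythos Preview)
Your proof is correct, and it takes a different (and in some ways tidier) route than the paper's.

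The paper does not work with the set-level potential $\phi_i(k)=\min_{x\in X_k}f_i(x)$. Instead, it constructs a single \emph{linked sequence}: starting from any $x_0\in X_0$, it defines $x_{k+1}\in X_{k+1}$ to be a point with $F(x_{k+1})\le F(x_k+s(x_k))$ (such a point exists because $x_k+s(x_k)\in\tilde X_k$ and $X_{k+1}$ retains a dominating representative of every element of $\tilde X_k$). Then item~\ref{f_decrease_x} of Remark~\ref{rem:HI_increase} gives $f_i(x_k)-f_i(x_{k+1})\ge c\,\|v(x_k+s(x_k))\|^{(p+1)/p}$ for every $i$, and telescoping along this one sequence with the lower bound $\fmin_{\hat\imath}$ yields the iteration bound; the function-evaluation bound is then obtained exactly as you do.

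Both arguments hinge on the same per-objective decrease from Remark~\ref{rem:HI_increase}\ref{f_decrease_x}, so the improvement to $\eps^{-(p+1)/p}$ comes from the same place. What you do differently is replace the explicit sequence by the potential $\phi_i$; this buys you a clean monotonicity statement ($\phi_i(k+1)\le\phi_i(k)$ follows from $X_k\subseteq\tilde X_k$ and nondominance filtering alone) and sidesteps the need to verify, iteration by iteration, that a dominating successor of $x_k+s(x_k)$ lands in $X_{k+1}$. The paper's construction, on the other hand, makes the telescoping literal and shows that a single trajectory through the sets witnesses the bound. The two approaches are equivalent in strength here; yours is slightly more robust to the bookkeeping issues you flag (Pareto-stationary members of $X_k$ outside $\Keps''$, and the exact contents of $\tilde X_k$), since those only enter through the monotonicity of $\phi_i$, which holds unconditionally.
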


\begin{proof}
Using item~\ref{f_decrease_x} of Remark~\ref{rem:HI_increase} and the instructions for computing the set $X_{k+1}$ at every iteration $k$, we can define a sequence $\{x_k\} \subseteq \Rn$ such that, starting from any $x_0\in X_0$, we have that $F(x_{k+1})\le F(x_k + s(x_k))$ and $x_{k+1} \in X_{k+1}$ for all $k \ge 0$. Moreover, still using item~\ref{f_decrease_x} of Remark~\ref{rem:HI_increase},  we have
\[
f_i(x_k) - f_i(x_{k+1}) \ge f_i(x_k) - f_i(x_k + s(x_k)) \ge c \|v(x_{k+1})\|^{(p+1)/p}, \quad i = 1,\ldots,m,
\]
where $v(\cdot)$ is defined as in~\eqref{vx}.
Now, let $\hat\imath\in\{1,\dots,m\}$ be such that 
\[
f_{\hat \imath}(x_0) - \fmin_{\hat \imath} = \min_{i=1,\dots,m}(f_i(x_0)-\fmin_i).
\]
Note that $f_{\hat \imath}(x_0) - \fmin_{\hat \imath} < \infty$ by Assumption~\ref{ass:boundedness}.
Then, for any $k \ge 0$, summing up all the objective decreases until iteration $k$, we can write
\begin{equation}\label{fdecr_rate}
f_{\hat \imath}(x_0) - \fmin_{\hat \imath} \ge
f_{\hat \imath}(x_0) - f_{\hat \imath}(x_{k+1}) =
\sum_{\ell=0}^k (f_{\hat \imath}(x_{\ell}) - f_{\hat \imath}(x_{\ell+1})) \ge
c \sum_{\ell=0}^k \|v(x_{\ell+1})\|^{(p+1)/p}.
\end{equation}
Letting $k \to \infty$ in~\eqref{fdecr_rate}, we obtain
\[
f_{\hat \imath}(x_0) - \fmin_{\hat \imath} \ge c \sum_{\ell=0}^{\infty} \|v(x_{\ell+1})\|^{(p+1)/p} \ge c \sum_{\ell \in \Keps''} \|v(x_k)\|^{(p+1)/p} \ge c |\Keps''| \epsilon^{(p+1)/p},
\]
hence proving the upper bound on $|K_{\epsilon}''|$. 

The upper bound on $NF_{\eps}''$ can be obtained by the same reasoning as in the proof of Theorem~\ref{th:Keps1}, noting that $\bar k \le |K_\eps''|$.
\end{proof}

\begin{remark}
Assumption~\ref{ass:boundedness} can be weakened in Theorem~\ref{th:Keps2}. Namely, Theorem~\ref{th:Keps2} still holds if we assume that $\hat \jmath \in \{1,\ldots,m\}$ exists such that $\fmin_{\hat\jmath}>-\infty$.
\end{remark}

From Theorem~\ref{th:Keps1} we see that, to produce a set $X_k$ where all points are $\epsilon$-approximate Pareto-stationary, the maximum number of iterations and function evaluations required by Algorithm HOP are $\bigO{\epsilon^{-m(p+1)/p}}$ and $\bigO{|X(\eps)|\epsilon^{-m(p+1)/p}}$, respectively, thus depending on both the number of objective functions $m$ and the model order $p$.
Interestingly, the dependency on $m$ does not appear in the worst-case complexity bounds to generate at least one $\epsilon$-approximate Pareto-stationary point, which are $\bigO{\epsilon^{-(p+1)/p}}$ for the number of iterations and $\bigO{|X(\eps)| \epsilon^{-(p+1)/p}}$ for the number of functions evaluations, as stated in Theorem~\ref{th:Keps2}.
Note the latter result aligns with the worst-case complexity bounds given in~\cite{calderon:2022} for algorithms that generate a single point.

\subsection{Using inexact minimizers}
In this section, we analyze Algorithm HOP assuming that $s$ is chosen as an approximate minimizer of $m^p(s;x,\sigma)$ for any point $x$ and regularization parameter vector $\sigma$.
Specifically, we will show that we maintain the same worst-case complexity as in the above subsection (up to constant factors).

Here, given a point $x \in \Rn$ and a regularization parameter vector $\sigma \in \Rm_+$, we just require $s$ and $\lambda_i$, $i=1,\dots,m$, to satisfy the following conditions:
\begin{subequations}\label{KKT_approx}
\begin{align}
& m^p(s;x,\sigma) \le m^p(0;x,\sigma) = 0, \label{opt0_approx} \\
& \sum_{i=1}^m \lambda_i \le \theta, \label{opt1_approx} \\
& \Bigl\|\sum_{i=1}^m \lambda_i \nabla_s m^p_i(s;x,\sigma_i)\Bigr\| \le \tau \|s\|^p, \label{opt2_approx}\\
& \lambda_i \ge 0, \quad i = 1,\ldots,m,  \label{opt3_approx}
\end{align}
\end{subequations}
with given $\theta \in [1,\infty)$ and $\tau \in [0,\infty)$.

We see that~\eqref{opt0_approx} imposes a decrease for $m^p(s;x,\sigma)$, while~\eqref{opt1_approx}--\eqref{opt3_approx} express approximate KKT conditions (cf. the original KKT conditions given in~\eqref{kkt1}--\eqref{kkt5}).
In practice, to obtain $s$ satisfying~\eqref{opt0_approx}--\eqref{opt3_approx}, we can perform an inexact minimization of $m^p(s;x,\sigma)$ with respect to $s$ and get $\lambda$ as an approximate KKT multiplier vector, thus avoiding the issues connected with global optimization.

Then, Propositions~\ref{prop:g_ub} and~\ref{prop:sigma_ub} can be reformulated according to the new approximate conditions, leading to the same worst-case complexity bounds (up to constant factors) by the same arguments used in the previous subsection.

\begin{proposition}\label{prop:g_ub_approx}
Given $x \in \Rn$ and $\sigma \in \Rm_+$, let $\bar s$ satisfying~\eqref{KKT_approx}. Then,
\[
\|v(x+\bar s)\| \le \biggl[\tau + \theta\biggl(\frac{(p+1)\max_{i=1,\ldots,m} \sigma_i}{p!} + \frac{\Lmax}{(p-1)!}\biggr)\biggr] \|\bar s\|^p,
\]
where $v(\cdot)$ is defined as in~\eqref{vx}.
\end{proposition}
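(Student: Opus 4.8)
The plan is to mirror the proof of Proposition~\ref{prop:g_ub} almost verbatim, substituting the approximate KKT conditions~\eqref{KKT_approx} for the exact ones~\eqref{kkt1}--\eqref{kkt5} at each point where they were used. First I would observe that, by Lemma~\ref{lemma:dual} applied with the normalized multiplier vector, we can bound $\|v(x+\bar s)\|$ by the norm of a convex combination of the gradients $\nabla f_i(x+\bar s)$. Since the approximate conditions only guarantee $\sum_{i=1}^m|\lambda_i|\le\delta$ rather than $\sum_i\lambda_i=1$ with $\lambda\ge 0$, the first step is to reduce to a genuine convex combination: I would set $u_i := \lambda_i / \bigl(\sum_{\ell=1}^m \lambda_\ell\bigr)$ (or work directly with $\|\sum_i\lambda_i\nabla f_i\|$ and absorb the normalization constant), so that Lemma~\ref{lemma:dual} yields $\|v(x+\bar s)\|\le\bigl\|\sum_{i=1}^m u_i\nabla f_i(x+\bar s)\bigr\|$. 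The bound $\sum_i|\lambda_i|\le\delta$ is exactly what lets us pass from the $u_i$-combination back to the $\lambda_i$-combination at the cost of a factor $\delta$.

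Next I would split, exactly as in~\eqref{g_ub}, the combination $\sum_i\lambda_i\nabla f_i(x+\bar s)$ into a Taylor-gradient part $\sum_i\lambda_i\nabla_s T_i^p(x,\bar s)$ and a remainder part $\sum_i\lambda_i\bigl(\nabla f_i(x+\bar s)-\nabla_s T_i^p(x,\bar s)\bigr)$, and bound each separately. For the remainder part, the Lipschitz estimate~\eqref{ineq_lips} together with $\sum_i|\lambda_i|\le\delta$ gives a bound of the form $\delta\,\frac{\Lmax}{(p-1)!}\|\bar s\|^p$, replacing the factor $1$ coming from $\sum_i\lambda_i=1$ in~\eqref{g_ub2} by the factor $\delta$ from~\eqref{opt1_approx}. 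For the Taylor-gradient part, I would again add and subtract $\frac{(p+1)\sigma_i}{p!}\|\bar s\|^{p-1}\bar s$ so as to reconstruct $\nabla_s m_i^p(x,\bar s)$, just as in~\eqref{g_ub1}; this time the term $\bigl\|\sum_i\lambda_i\nabla_s m_i^p(x,\bar s)\bigr\|$ is controlled not by the exact stationarity~\eqref{kkt1} but by the approximate condition~\eqref{opt2_approx}, which contributes the extra additive $\tau\|\bar s\|^p$. The leftover regularization term $\sum_i|\lambda_i|\frac{(p+1)\sigma_i}{p!}\|\bar s\|^p$ is again bounded using~\eqref{opt1_approx} by $\delta\,\frac{(p+1)\max_i\sigma_i}{p!}\|\bar s\|^p$.

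Combining these three contributions gives precisely the claimed bound $\|v(x+\bar s)\|\le\bigl[\tau+\delta\bigl(\tfrac{(p+1)\max_i\sigma_i}{p!}+\tfrac{\Lmax}{(p-1)!}\bigr)\bigr]\|\bar s\|^p$, which correctly reduces to the exact statement of Proposition~\ref{prop:g_ub} in the limiting case $\tau=0$, $\delta=1$. The main obstacle, and the only point where care is genuinely needed, is the normalization: the exact proof relies on $\lambda$ being a probability vector to invoke Lemma~\ref{lemma:dual}, whereas here~\eqref{opt1_approx} only bounds $\|\lambda\|_1$. One must verify that $\sum_i\lambda_i\neq 0$ (or else handle that degenerate case separately, noting that the right-hand side is then trivially controlled) and track the normalizing constant through Lemma~\ref{lemma:dual}; everything else is a routine rerun of the triangle-inequality splitting in~\eqref{g_ub}--\eqref{g_ub2} with the constants $1$ replaced by $\delta$ and an additive $\tau\|\bar s\|^p$ inserted for the inexact stationarity.
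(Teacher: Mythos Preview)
Your approach mirrors the paper's exactly: its proof just says to rerun the argument of Proposition~\ref{prop:g_ub}, multiplying the final terms in~\eqref{g_ub1} and~\eqref{g_ub2} by $\delta$ via~\eqref{opt1_approx} and inserting the additive $\tau\|\bar s\|^p$ via~\eqref{opt2_approx}---precisely the modifications you list. You actually go further than the paper by isolating the one nontrivial step, namely that Lemma~\ref{lemma:dual} requires a vector on the simplex, which the paper's one-line sketch does not address at all.

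Your resolution of that point, however, does not work as written. Normalizing $u_i=\lambda_i/\sum_\ell\lambda_\ell$ does not guarantee $u\ge 0$ (condition~\eqref{opt1_approx} allows negative $\lambda_i$), and even when $\lambda\ge 0$ the factor picked up when passing from $u$ back to $\lambda$ is $1/\sum_\ell\lambda_\ell$, which needs a \emph{lower} bound on $\sum_\ell\lambda_\ell$; condition~\eqref{opt1_approx} supplies only an \emph{upper} bound $\delta$, so the claim that ``$\sum_i|\lambda_i|\le\delta$ is exactly what lets us pass back at the cost of a factor $\delta$'' goes in the wrong direction. Indeed, $\lambda=0$ satisfies~\eqref{opt1_approx}--\eqref{opt2_approx} trivially for any $s$ with $m^p(x,s)\le 0$, so the stated inequality cannot follow from~\eqref{KKT_approx} alone. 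The argument does go through cleanly if one additionally assumes $\lambda_i\ge 0$ and $\sum_i\lambda_i\ge 1$---natural for approximate KKT multipliers and presumably what the paper intends by ``cf.\ the original KKT conditions''---since then Lemma~\ref{lemma:dual} applied to $u=\lambda/\sum_\ell\lambda_\ell$ yields $\|v(x+\bar s)\|\le\bigl\|\sum_i\lambda_i\nabla f_i(x+\bar s)\bigr\|$ directly, after which your splitting delivers the stated bound. But this extra hypothesis is not stated, either in the paper's conditions or in your proposal.
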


\begin{proof}
We can reason as in the proof of Proposition~\ref{prop:g_ub}, with the only difference being in the fact that the last term in~\eqref{g_ub1}
must be multiplied by $\theta$, in view of~\eqref{opt1_approx}, and added to $\tau \|\bar s\|^p$, in view of~\eqref{opt2_approx},
while the last term in~\eqref{g_ub2} should be multiplied by $\theta$, in view of~\eqref{opt1_approx}.
\end{proof}

\begin{proposition}\label{prop:sigma_ub_approx}
Given a non-Pareto-stationary point $x\in \R^n$, assume that $\sigma_i \ge (1-\eta)^{-1} L_i$, $i=1,\ldots,m$, with $\eta \in (0,1)$, and $\bar s$ satisfying~\eqref{KKT_approx}. Then,
\[
F(x+\bar s) \le F(x) -\eta \,\dfrac{\|\bar s\|^{p+1}}{p!}\sigma.
\]
\end{proposition}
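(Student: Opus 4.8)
The plan is to reproduce the proof of Proposition~\ref{prop:sigma_ub} almost verbatim, because the only place where the exact-minimizer hypothesis $\bar s\in\argmin_s m^p(x,s)$ entered that argument was through Proposition~\ref{prop:sk}, i.e.\ through the nonpositivity $m_i^p(x,\bar s)\le 0$ of the model values. In the present inexact setting this is supplied directly by the descent condition~\eqref{opt0_approx}, which states $m^p(x,\bar s)\le m^p(x,0)=0$ and hence $m_i^p(x,\bar s)\le 0$ for every $i=1,\dots,m$. I would stress that the approximate KKT conditions~\eqref{opt1_approx}--\eqref{opt2_approx} play no role here; they enter only in Proposition~\ref{prop:g_ub_approx}, which bounds $\|v(x+\bar s)\|$, whereas the present statement is a pure sufficient-decrease estimate.

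First I would dispose of the degenerate case $\bar s=0$: then $\|\bar s\|^{p+1}=0$ and the claimed inequality reduces to $F(x)\le F(x)$, which holds trivially. Unlike in Proposition~\ref{prop:sigma_ub}, I would not invoke Remark~\ref{rem:stats0} to rule this out, since the non-Pareto-stationarity of $x$ is irrelevant in the degenerate case; I only need to separate it out so that the ratio $\rho_i$ introduced below is well defined. Hence I assume $\bar s\ne 0$ from now on.

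Next, for each fixed $i\in\{1,\dots,m\}$ I would set $\rho_i:=\dfrac{f_i(x)-f_i(x+\bar s)}{T_i^p(x,0)-T_i^p(x,\bar s)}$ and use the identity $T_i^p(x,0)-T_i^p(x,\bar s)=-m_i^p(x,\bar s)+\tfrac{\sigma_i}{p!}\|\bar s\|^{p+1}$, which together with $m_i^p(x,\bar s)\le 0$ (now from~\eqref{opt0_approx} rather than Proposition~\ref{prop:sk}) gives the strictly positive lower bound $T_i^p(x,0)-T_i^p(x,\bar s)\ge \tfrac{\sigma_i}{p!}\|\bar s\|^{p+1}>0$. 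Bounding the numerator of $1-\rho_i=\dfrac{f_i(x+\bar s)-T_i^p(x,\bar s)}{T_i^p(x,0)-T_i^p(x,\bar s)}$ by $\tfrac{L_i}{p!}\|\bar s\|^{p+1}$ via~\eqref{ub_lips}, and the denominator from below as above, yields $1-\rho_i\le L_i/\sigma_i$. The componentwise hypothesis $\sigma_i\ge L_i/(1-\eta)$ then forces $1-\rho_i\le 1-\eta$, i.e.\ $\rho_i\ge\eta$. Multiplying the positive predicted decrease by $\rho_i\ge\eta>0$ and using the lower bound once more gives $f_i(x)-f_i(x+\bar s)\ge \eta\tfrac{\sigma_i}{p!}\|\bar s\|^{p+1}$; since $i$ was arbitrary, this is exactly the componentwise form of the asserted $F(x+\bar s)\le F(x)-\eta\tfrac{\|\bar s\|^{p+1}}{p!}\sigma$.

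I do not expect any genuine obstacle: the content is essentially a bookkeeping check that exact optimality of $\bar s$ was used only to certify $m_i^p(x,\bar s)\le 0$, a fact guaranteed in the inexact regime precisely by~\eqref{opt0_approx}. The one point requiring (minor) care is the degenerate case $\bar s=0$, where $\rho_i$ is undefined but the conclusion is trivial, so it must be isolated before $\rho_i$ is introduced.
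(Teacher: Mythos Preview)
Your proposal is correct and matches the paper's approach exactly: the paper's proof simply states that it is identical to that of Proposition~\ref{prop:sigma_ub}, observing that the nonpositivity $m^p(x,\bar s)\le 0$ now comes from condition~\eqref{opt0_approx} rather than from Proposition~\ref{prop:sk}. Your handling of the degenerate case $\bar s=0$ is a harmless clarification.
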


\begin{proof}
It is identical to the proof of Proposition~\ref{prop:sigma_ub} (just observing that~\eqref{opt0_approx} is now a condition on $s$
rather than coming from Proposition~\ref{prop:sk}).
\end{proof}

\begin{remark}\label{rem:compl_approx}
Replacing Propositions~\ref{prop:g_ub}--\ref{prop:sigma_ub} by Propositions~\ref{prop:g_ub_approx}--\ref{prop:sigma_ub_approx}, respectively, has only the effect to generate, instead of the constant $c$ defined in~\eqref{c}, the following new constant:
\begin{equation}\label{c_tilde}
\tilde c := \Biggl(\eta \,\dfrac{\sigma_l^{\min}}{p!}\Biggr) \biggl[\tau + \theta\biggl(\frac{(p+1)\sigma^{\max}}{p!} + \frac{\Lmax}{(p-1)!}\biggr)\biggr]^{-(p+1)/p}\hspace*{-1.3cm}.    
\end{equation}
Namely, if each $s^j$ in the RS procedure satisfies~\eqref{KKT_approx}, then the results of Theorems~\ref{th:Keps1}--\ref{th:Keps2} hold with $c$ replaced by $\tilde c$.
\end{remark}

\section{LHOP: a light version of HOP}\label{sec:LHOP}
Each iteration of Algorithm HOP may be computationally expensive as the RS procedure needs to be applied to every point in $X_k$.
In this section, we propose a variant called Light HOP (LHOP) where, at each iteration, the RS procedure is applied to a single point in the set $X_k$ rather than to all of them.
Note that, in the proposed version, we do not specify any rule to choose such a point, allowing any user-specified criterion to be applied.

\begin{algorithm}[ht!]
\caption{Light High-Order algorithm for Pareto-front reconstruction (LHOP)}
\begin{algorithmic}[1]
\State {\bf given} $X_0 \subseteq \R^n$
\State set $k=0$
\While{all $x_k\in X_k$ are non-Pareto-stationary}
\State choose $x_k\in X_k$
\State compute $Y(x_k)$, $s(x_k)$, $\sigma(x_k)$ by \texttt{Regularized Search}$(x_k,X_k)$ 
\State set $X_{k+1}$ as the nondominated points in  $X_k\cup Y(x_k)$
\State set $k\leftarrow k+1$
\EndWhile
\end{algorithmic}
\end{algorithm}

The worst-case complexity bounds for Algorithm LHOP are summarized in the next result.

\begin{theorem}\label{th:Keps2_LHOP}
Given $\eps>0$, define $\Keps''$ and $NF_{\eps}''$ as in Theorem~\ref{th:Keps2}.
If Assumption~\ref{ass:boundedness} holds, then
\begin{equation*}
\begin{split}
|K_{\eps}''| & \le \left\lfloor\dfrac{\overline{HI}-HI_0}{\hat c^m}\eps^{-m(p+1)/p}\right\rfloor,\\
NF_{\eps}'' & \le n_F \left\lfloor\dfrac{\overline{HI}-HI_0}{\hat c^m}\eps^{-m(p+1)/p}\right\rfloor,
\end{split}
\end{equation*}
with
\[
\hat c =
\begin{cases}
c \quad & \text{if each $s^j$ in the RS procedure satisfies~\eqref{s_global_min}}, \\
\tilde c \quad & \text{if each $s^j$ in the RS procedure satisfies~\eqref{KKT_approx}},
\end{cases}
\]
where $n_F$, $c$ and $\tilde c$ are given in~\eqref{nF}, \eqref{c} and~\eqref{c_tilde}, respectively.
\end{theorem}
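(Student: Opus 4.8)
The plan is to mirror the hypervolume-based argument from Theorem~\ref{th:Keps1}, but adapted to the fact that LHOP updates only a single point per iteration. The key observation is that the inequality in item~\ref{HI_increase_x} of Remark~\ref{rem:HI_increase} does not actually depend on HOP processing \emph{all} points of $X_k$: it concerns a single non-Pareto-stationary point $x_k$ and the resulting increase $HI(F(X_{k+1})) - HI(F(X_k)) \ge \hat c^m \|v(x_k + s(x_k))\|^{m(p+1)/p}$. First I would verify that Remark~\ref{rem:HI_increase}, which is stated for a generic non-Pareto-stationary $x_k \in X_k$, applies verbatim to the single point chosen at line~4 of LHOP, since the RS procedure is invoked identically and the set-update rule (nondominated points of $X_k \cup Y(x_k)$) preserves the hypervolume-increase guarantee of Proposition~\ref{prop:HI_increase}.

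With that in hand, the proof is essentially a telescoping-and-bound argument. For an iteration $r$, I would write $HI(F(X_{r+1})) - HI_0 = \sum_{k=0}^r (HI(F(X_{k+1})) - HI(F(X_k)))$, let $r \to \infty$, and use~\eqref{HI} to bound the left side by $\overline{HI} - HI_0$. Applying item~\ref{HI_increase_x} of Remark~\ref{rem:HI_increase} to the chosen $x_k$ at each iteration gives
\[
\overline{HI} - HI_0 \ge \sum_{k=0}^{\infty} \hat c^m \|v(x_k + s(x_k))\|^{m(p+1)/p} \ge \sum_{k \in \Keps''} \hat c^m \|v(x_k + s(x_k))\|^{m(p+1)/p}.
\]
For $k \in \Keps''$, the definition (from Theorem~\ref{th:Keps2}) requires $\|v(x_k + s(x_k))\| \ge \eps$ for \emph{all} $x_k \in X_k$, in particular for the one chosen at that iteration, so each summand is at least $\hat c^m \eps^{m(p+1)/p}$. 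This yields $\overline{HI} - HI_0 \ge |\Keps''| \hat c^m \eps^{m(p+1)/p}$, and rearranging gives the stated bound on $|\Keps''|$.

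For the function-evaluation bound, I would note that LHOP calls the RS procedure exactly \emph{once} per iteration (line~5), not $|X_k|$ times as in HOP, so the factor $|X(\eps)|$ disappears: each iteration costs at most $n_F$ function evaluations as established in Remark~\ref{rem:nf}. Combined with $\bar k \le |\Keps''|$, this gives $NF_{\eps}'' \le n_F \lfloor (\overline{HI} - HI_0) \hat c^{-m} \eps^{-m(p+1)/p} \rfloor$. Finally, the two-case definition of $\hat c$ follows immediately from Remark~\ref{rem:compl_approx}: the entire argument depends on the constant only through item~\ref{HI_increase_x} of Remark~\ref{rem:HI_increase}, which uses $c$ under~\eqref{s_global_min} and $\tilde c$ under~\eqref{KKT_approx}.

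\textbf{The main obstacle} I anticipate is subtle rather than computational: justifying that $\Keps''$, defined in Theorem~\ref{th:Keps2} via a ``for all $x_k \in X_k$'' condition tailored to HOP, is the \emph{right} index set for LHOP and that its complexity genuinely scales like $\eps^{-m(p+1)/p}$ rather than $\eps^{-(p+1)/p}$. The point is that LHOP \emph{cannot} invoke the objective-decrease telescoping argument of Theorem~\ref{th:Keps2} (which produced the sharper $\eps^{-(p+1)/p}$ bound), because that argument required tracking a decreasing sequence $\{f_{\hat\imath}(x_k)\}$ along points that are updated every iteration---and in LHOP an arbitrary point is chosen, so no single objective coordinate is guaranteed to decrease monotonically along any fixed trajectory. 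Hence the hypervolume route is the only one available, and it structurally carries the extra power of $m$. I would make sure to emphasize in the write-up why the lighter per-iteration cost (dropping $|X(\eps)|$) comes at the price of the worse $\eps$-dependence, so the bound is internally consistent with the discussion in the introduction.
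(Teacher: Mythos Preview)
Your proposal is correct and follows essentially the same approach as the paper: the paper's proof says explicitly that it is ``identical to the one of Theorem~\ref{th:Keps1} with $K_\eps'$ replaced with $K_\eps''$'' because LHOP picks a single $x_k\in X_k$ and the $\Keps''$ condition guarantees $\|v(x_k+s(x_k))\|\ge\eps$ for that chosen point, then invokes Remark~\ref{rem:compl_approx} for the inexact case. Your extra discussion of why the objective-decrease argument of Theorem~\ref{th:Keps2} is unavailable for LHOP is a nice addition that the paper leaves implicit.
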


\begin{proof}
If each $s^j$ in the RS procedure satisfies~\eqref{s_global_min}, then the proof is identical to the one of Theorem~\ref{th:Keps1} with $K_\eps'$ replaced with $K_\eps''$ since Algorithm LHOP  picks one point $x_k \in X_k$ at every iteration $k$, allowing us to state that
\[
HI(F(X_{{k}}\cup\{x_{{k}}+s(x_k)\})) - HI(F(X_{{k}})) \ge  c^m(\|v(x_k+s(x_k))\|)^{m(p+1)/p} \ge c^m\eps^{m(p+1)/p}
\]
 holds for all $k\in K_\eps''$ (but note that this is not true for all $k\in K_\eps'$).
 If each $s^j$ in the RS procedure satisfies~\eqref{KKT_approx}, then the reasoning is the same as before by taking into account Remark~\ref{rem:compl_approx}.
\end{proof}

For Algorithm LHOP, we can only provide worst-case complexity bounds for generating at least one $\epsilon$-approximate Pareto-stationary point. In particular, according to Theorem~\ref{th:Keps2_LHOP}, the worst-case complexity is $\bigO{\epsilon^{-m(p+1)/p}}$ for both the number of iterations and the number of function evaluations.
Hence, compared to the results given for HOP in Theorem~\ref{th:Keps2}, the maximum number of iterations for LHOP is worse since it depends on $m$, while the maximum number of function evaluations, although also dependent on $m$, does not depend on $|X(\eps)|$.
However, recall that LHOP performs a less exhaustive analysis of the points in the current set $X_k$ at each iteration $k$.

\section{Numerical examples}\label{sec:examples}
In this section, we report a preliminary numerical experience to compare the two algorithms HOP and LHOP and assess their behavior. By virtue of the results obtained, we will also try to put into perspective the different complexity results.

\paragraph{Problems used in the numerical experience} We use two classes of multiobjective problems, that is, problems that are inherently multiobjective (from the collections proposed in \cite{custodio2011direct} and \cite{Thomann19}) and multiobjective problems built using two functions selected from the CUTEst \cite{cutest} test set as proposed in \cite{de2026objective}.

\begin{table}
\centering
\small
    \begin{tabular}{rc|cccc}\hline
    Problem name & P.nr. &n & m & ref\\\hline
                  T1 & 1&  2 &  2 &\cite{custodio2011direct}\\
                  T2 & 2&  2 &  2 &\cite{custodio2011direct}\\
       LOVISON1 (unc) & 3&  2 &  2 &\cite{custodio2011direct}\\
       LOVISON3 (unc) & 4&  2 &  2 &\cite{custodio2011direct}\\
                FES2 & 5& 10 &  3 &\cite{custodio2011direct}\\
                IKK1 & 6&  2 &  3 &\cite{custodio2011direct}\\
                ZLT1 & 7&  4 &  3 &\cite{custodio2011direct}\\
BROWNDEN vs ALLINITU & 8&  4 &  2 &\cite{de2026objective}\\
  BROWNAL vs ARWHEAD & 9& 10 &  2 &\cite{de2026objective}\\
   BROWNAL vs VARDIM & 10& 10 &  2 &\cite{de2026objective}\\
   ARWHEAD vs VARDIM &11& 10 &  2 &\cite{de2026objective}\\
 ZANGWIL2 vs ROSENBR &12&  2 &  2 &\cite{de2026objective}\\
    ZANGWIL2 vs CUBE &13&  2 &  2 &\cite{de2026objective}\\
 ZANGWIL2 vs WAYSEA1 &14&  2 &  2 &\cite{de2026objective}\\
  ROSENBR vs WAYSEA1 &15&  2 &  2 &\cite{de2026objective}\\
     ROSENBR vs CUBE &16&  2 &  2 &\cite{de2026objective}\\
     WAYSEA1 vs CUBE &17&  2 &  2 &\cite{de2026objective}\\\hline
     \end{tabular}
    \caption{Description of the problems used in the numerical experience. Column labels are: `P.nr.' for problem number; `$n$' for the number of variables; `$m$' for the number of objective functions; `ref' for the reference where the problem definition can be found.}
    \label{tab:placeholder3}
\end{table}

\paragraph{Implementation details} Both the algorithms HOP and LHOP have been implemented in Matlab\textsuperscript{\textcopyright} (version 2026a) and tested on an intel\textsuperscript{\textregistered} Xeon\textsuperscript{\textregistered} w3-2425 with 12 cores and 12 GB RAM running Linux Ubuntu 24.04 LTS. In the implementation of HOP and LHOP, we made the following choices:
\begin{itemize}
\item \textit{Stationarity tolerance}. We use the value $\epsilon = 10^{-5}$;
\item \textit{Ordering of $X_k$}. Two strategies for ordering points in $X_k$ are considered, identified by the parameter \texttt{spread\_option} $\in \{1,2\}$:
\begin{description}
    \item \texttt{spread\_option} $=1$ means that, at every iteration $k$, we order $X_k$ on the basis of the \textit{spread} measure as defined in \cite{custodio:2021}; then, in LHOP, we select the point which is adjacent to the largest gap  in the current Pareto front in objective space;   
    \item \texttt{spread\_option} $=2$ means that, at every iteration $k$, we order $X_k$ on the basis of the stationarity measure of the points $x_k\in X_k$; then in LHOP we select the worst point in terms of stationarity measure;
\end{description}

\item \textit{Choice of regularized search parameters}. Ideally, $[\sigma_l, \sigma_u]$ should approximate~the~interval where the reciprocal of the (local) Lipschitz constants lie (see Proposition~\ref{prop:F_decr}).
In the absence of this information, one can draw inspiration from what is commonly done for regularized methods in single-objective optimization, which typically update the regularization parameter by means of rules inherited from trust-region schemes (see, e.g., \cite{birgin:2017,cartis:2011a}).
In our case, since we consider a set of points $X_k$ at each iteration $k$, we use a reference value given by the mean $\bar \sigma_k$ of the regularization parameters returned by the Regularized Search over all the points in $X_k$. Then, at iteration $k+1$, we initialize the Regularized Search with $\sigma = \bar \sigma_k/2$ for all points in $X_{k+1}$.
Note that this implicitly defines an interval $[\sigma_l, \sigma_u] \subset (0,\infty)^m$ containing the initial value of $\sigma$, since we start with a positive value for $\sigma$ at iteration $k=0$ (so that $\sigma_l$ is a positive vector) and, in view of Remark~\ref{rem:sigma}, the regularization parameters are upper bounded by finite values (so that $\sigma_u$ is a finite vector). 
Moreover, we use $\delta = 0.5$ in the Regularized Search, in order to double the value of each regularization parameter whenever a sufficient decrease is not obtained.
Note that since in LHOP we select a single point from $X_k$ at any iteration $k$, the mean vector $\bar \sigma_k$ coincides with $\sigma (x_k)$.

\end{itemize} 

\paragraph{Hypervolume computation} To compute the hypervolume, we first need to define the reference vector in objective space.
To this end, let us denote with ${\cal P}$ the set of multiobjective problems and with ${\cal S}$ the set of solvers in the comparison. For every multiobjective problem $p\in \cal P$ and solver $s\in\cal S$, let ${\cal F}_{p,s}$ be the Pareto front approximation found by solver $s$ for problem $p$. We define 
\[
{\cal F}_p = \bigcup_{s\in\cal S}{\cal F}_{p,s}.
\]
Then, we define the ideal $\tilde y^{I,p}$ and the nadir $\tilde y^{N,p}$ points for problem $p$ as follows:
\begin{equation*}
\begin{split}
\tilde y^{I,p} &=\left(\min_{y\in {\cal F}_p}y_1,\min_{y\in {\cal F}_p}y_2,\dots,\min_{y\in {\cal F}_p}y_{m_p}\right)^T,\\
\tilde y^{N,p} &=\left(\max_{y\in {\cal F}_p}y_1,\max_{y\in {\cal F}_p}y_2,\dots,\max_{y\in {\cal F}_p}y_{m_p}\right)^T+\mathbf{1},
\end{split}
\end{equation*}
where $m_p$ denotes the number of objective {functions} of problem $p$. According to Definition~\ref{HI_definition}, the hypervolume obtained by a solver $s\in\cal S$ is $HI({\cal F}_{p,s})$ where the reference vector is selected as $\rho = \tilde y^{N,p}$. 
Note that the hypervolume could be biased by the possible different scalings of the objective functions for problem $p$. To avoid privileging one objective function over the others in the computation of the hypervolume
  (as proposed in \cite{bigeon2021dmulti}), we define the following transformation $T:\R^{m_p}\to \R^{m_p}$ such that $T(y) = z$ with
\begin{equation}\label{bigeontransform}
z_i = \left\{\begin{array}{ll}
  (y_i- \tilde y^{I,p}_i)/(\tilde y^{N,p}_i-\tilde y^{I,p}_i) & \text{if}\ \tilde y^{N,p}_i-\tilde y^{I,p}_i\neq 0\\
  y_i- \tilde y^{I,p}_i & \text{otherwise}.
\end{array}\right.
\end{equation}
Then, given a problem $p\in\cal P$ and a solver $s\in\cal S$, the hypervolume $HV_{p,s}$ is computed as
	\[
	{
	HV_{p,s} = HI(T({\cal F}_{p,s})) = {\rm Vol}\left(\bigcup_{y\in{\cal F}_{p,s}}[T(y),T(\tilde y^{N,p})]\right)}.
	\]

\paragraph{Results on the test problems}
The results obtained by running the solvers HOP and LHOP on the 17 problems are reported in Tables~\ref{tab:placeholder_newhvconalfa_nocute}--\ref{tab:placeholder_newhvconalfa}. In particular, for each solver, we consider $p=1,2$, and ${\tt spread\_option}=1,2$.
\begin{table}[htb]
\centering
\small
    \begin{tabular}{l|cccr|cccr}\hline
    & \multicolumn{4}{c|}{HOP} & \multicolumn{4}{c}{LHOP}\\
    P.nr. & it & nP & nF & HV & it & nP & nF & HV \\\hline
    \multicolumn{9}{c}{\texttt{spread\_option}$=1$}\\\hline
1	&	17	&	4767	&	*	&	\bf	0.77	&	4973	&	4980	&	*	&		0.65	\\
2	&	6	&	4297	&	*	&		0.80	&	4978	&	60	&	*	&	\bf	0.82	\\
3	&	5	&	2468	&	*	&	\bf	0.72	&	4934	&	25	&	*	&		0.71	\\
4	&	7	&	769	&	*	&	\bf	0.21	&	4975	&	4967	&	*	&		0.00	\\
5	&	6	&	750	&	*	&	\bf	0.71	&	4957	&	65	&	*	&		0.32	\\
6	&	4	&	9179	&	*	&	\bf	0.98	&	4951	&	4998	&	*	&		0.97	\\
7	&	16	&	6469	&	*	&		$>$0.99	&	4979	&	4999	&	*	&		$>$0.99	\\
8	&	6	&	252	&	*	&	\bf	0.99	&	4925	&	27	&	*	&		0.91	\\
9	&	5	&	3131	&	*	&		0.90	&	4962	&	57	&	*	&	\bf	0.99	\\
10	&	26	&	2550	&	*	&		$>$0.99	&	4958	&	30	&	*	&		$>$0.99	\\
11	&	5	&	5444	&	*	&		0.83	&	4993	&	775	&	*	&	\bf	$>$0.99	\\
12	&	8	&	4189	&	*	&	\bf	0.95	&	4964	&	4986	&	*	&		0.91	\\
13	&	7	&	988	&	*	&	\bf	0.95	&	4967	&	4984	&	*	&		0.91	\\
14	&	4	&	1456	&	*	&	\bf	0.82	&	4946	&	92	&	*	&		0.81	\\
15	&	8	&	5216	&	*	&		0.98	&	4349	&	1111	&	*	&		0.98	\\
16	&	29	&	4896	&	*	&		$>$0.99	&	4959	&	4882	&	*	&		$>$0.99	\\
17	&	26	&	2777	&	*	&		$>$0.99	&	4747	&	4245	&	*	&		$>$0.99	\\\hline
    \multicolumn{9}{c}{\texttt{spread\_option}$=2$}\\\hline
1	&	16	&	1976	&	*	&	\bf	0.76	&	4878	&	312	&	*	&		0.72	\\
2	&	6	&	4713	&	*	&		0.80	&	4980	&	676	&	*	&	\bf	0.82	\\
3	&	4	&	5341	&	*	&		0.71	&	4912	&	93	&	*	&		0.71	\\
4	&	7	&	1573	&	*	&	\bf	0.12	&	4959	&	60	&	*	&		0.10	\\
5	&	6	&	719	&	*	&	\bf	0.68	&	4338	&	1704	&	*	&		0.48	\\
6	&	4	&	9192	&	*	&	\bf	0.98	&	4791	&	636	&	*	&		0.97	\\
7	&	16	&	6460	&	*	&		$>$0.99	&	4979	&	65	&	*	&		$>$0.99	\\
8	&	8	&	147	&	*	&		0.99	&	4404	&	1201	&	*	&	\bf	$>$0.99	\\
9	&	5	&	3086	&	*	&		0.89	&	4813	&	426	&	*	&	\bf	$>$0.99	\\
10	&	24	&	2768	&	*	&		$>$0.99	&	4962	&	34	&	*	&		$>$0.99	\\
11	&	5	&	5450	&	*	&		0.82	&	4867	&	720	&	*	&	\bf	$>$0.99	\\
12	&	8	&	4325	&	*	&	\bf	0.95	&	4947	&	203	&	*	&		0.90	\\
13	&	8	&	3460	&	*	&	\bf	0.94	&	4967	&	87	&	*	&		0.90	\\
14	&	5	&	1709	&	*	&	\bf	0.81	&	4946	&	96	&	*	&		0.80	\\
15	&	8	&	5187	&	*	&		0.98	&	4195	&	2008	&	*	&	\bf	0.98	\\
16	&	18	&	5502	&	*	&		$>$0.99	&	4944	&	59	&	*	&		$>$0.99	\\
17	&	28	&	5168	&	*	&		0.99	&	4870	&	86	&	*	&		0.99	\\\hline
    \end{tabular}
    \caption{Comparison of HOP and LHOP when $p=1$. The `*' symbol in the nF column means the code used all the 10000 allowed function evaluations. For each problem, the largest hypervolumes found by the solvers are highlighted in bold.}
    \label{tab:placeholder_newhvconalfa_nocute}
\end{table}

\begin{table}[htb]
\centering
\small
    \begin{tabular}{l|cccr|cccr}\hline
    & \multicolumn{4}{c|}{HOP} & \multicolumn{4}{c}{LHOP}\\
    P.nr. & it & nP & nF & HV & it & nP & nF & HV \\\hline
    \multicolumn{9}{c}{\texttt{spread\_option}$=1$}\\\hline
1	&	7	&	1589	&	*	&		0.01	&	4944	&	686	&	*	&		0.01	\\
2	&	1191	&	5	&	*	&		0.77	&	4955	&	8	&	*	&	\bf	0.78	\\
3	&	4	&	5363	&	*	&		0.71	&	4869	&	4904	&	*	&		0.71	\\
4	&	3	&	4	&	22	&		0.12	&	9	&	3	&	19	&		0.12	\\
5	&	4	&	998	&	*	&	\bf	0.72	&	4907	&	4929	&	*	&		0.50	\\
6	&	9	&	264	&	*	&	\bf	0.44	&	4951	&	6	&	*	&		0.12	\\
7	&	13	&	1007	&	*	&	\bf	0.73	&	4888	&	4896	&	*	&		0.68	\\
8	&	265	&	24	&	*	&	\bf	0.86	&	7228	&	6	&	*	&		0.84	\\
9	&	17	&	1344	&	*	&	\bf	0.93	&	4981	&	31	&	*	&		0.48	\\
10	&	11	&	1	&	24	&		$>$0.99	&	11	&	1	&	24	&		$>$0.99	\\
11	&	11	&	1320	&	*	&	\bf	0.95	&	4936	&	4983	&	*	&		0.84	\\
12	&	5	&	15	&	61	&	\bf	0.93	&	18	&	10	&	40	&		0.85	\\
13	&	5	&	19	&	71	&		0.82	&	25	&	12	&	56	&	\bf	0.83	\\
14	&	57	&	1037	&	*	&	\bf	0.82	&	7350	&	15	&	*	&		0.80	\\
15	&	9	&	5611	&	*	&	\bf	0.93	&	6	&	1	&	26	&		0.92	\\
16	&	11	&	1	&	33	&	\bf	0.96	&	4957	&	66	&	*	&		0.92	\\
17	&	12	&	445	&	*	&	\bf	0.77	&	4954	&	45	&	*	&		0.01	\\\hline
    \multicolumn{9}{c}{\texttt{spread\_option}$=2$}\\\hline
1	&	7	&	1539	&	*	&		0.01	&	4551	&	125	&	*	&		0.01	\\
2	&	1191	&	5	&	*	&	\bf	0.76	&	4946	&	7	&	*	&		0.74	\\
3	&	3	&	2373	&	*	&		0.71	&	4813	&	141	&	*	&		0.71	\\
4	&	3	&	4	&	22	&		0.12	&	8	&	3	&	18	&		0.12	\\
5	&	4	&	1043	&	*	&	\bf	0.71	&	4794	&	270	&	*	&		0.61	\\
6	&	6	&	1469	&	*	&		0.44	&	4944	&	38	&	*	&	\bf	0.46	\\
7	&	13	&	1061	&	*	&	\bf	0.73	&	4959	&	49	&	*	&		0.17	\\
8	&	80	&	80	&	*	&	\bf	0.87	&	8788	&	6	&	*	&		0.85	\\
9	&	8	&	403	&	*	&	\bf	0.92	&	4981	&	27	&	*	&		0.25	\\
10	&	11	&	1	&	24	&		$>$0.99	&	11	&	1	&	24	&		$>$0.99	\\
11	&	10	&	1538	&	*	&	\bf	0.95	&	4956	&	39	&	*	&		0.86	\\
12	&	5	&	14	&	60	&	\bf	0.93	&	18	&	10	&	40	&		0.85	\\
13	&	3	&	17	&	63	&		0.76	&	27	&	15	&	61	&	\bf	0.78	\\
14	&	196	&	138	&	*	&		0.80	&	4995	&	274	&	*	&		0.80	\\
15	&	10	&	5721	&	*	&	\bf	0.94	&	4892	&	133	&	*	&		0.92	\\
16	&	11	&	1	&	33	&		0.99	&	13	&	25	&	64	&		0.99	\\
17	&	13	&	585	&	*	&	\bf	0.86	&	4954	&	46	&	*	&		0.01	\\\hline
    \end{tabular}
    \caption{Comparison of HOP and LHOP when $p=2$. The `*' symbol in the nF column means the code used all the 10000 allowed function evaluations. For each problem, the largest hypervolumes found by the solvers are highlighted in bold.}
    \label{tab:placeholder_newhvconalfa}
\end{table}
Moreover, the column headers {\em it}, {\em nP}, {\em nF}, {\em HV} represent, respectively, number of iterations, number of nondominated points found, number of function evaluations, and hypervolume of the dominated region.   
In the tables, we highlight in bold the largest hypervolume found by the solvers HOP and LHOP. As it can be observed, with $p=1$, HOP achieves the best hypervolume on 16 out of the 34 test problems, whereas LHOP is the best-performing solver on only 8 problems. When the polynomial order is increased to $p=2$, HOP attains the best hypervolume on 20 out of 34 problems, while LHOP does so on only 4.

It is worth emphasizing that, according to the established complexity results, HOP requires a higher number of function evaluations per iteration than LHOP.
However, HOP consistently provides better hypervolume values.

Moreover, the order $p$ of the approximating polynomials has a clear impact on the ability of both solvers to identify stationary Pareto fronts. In particular, using $p=2$ consistently yields smaller stationarity measures than $p=1$, indicating that high-order approximations are more effective in driving the iterates toward stationarity.

\begin{table}[htb]
    \centering
    \begin{tabular}{l|rr|rr||rr|rr}\hline
    & \multicolumn{4}{c||}{$p=1$} & \multicolumn{4}{c}{$p=2$}\\
    &\multicolumn{2}{c|}{\small{\tt spread\_option}=1}&\multicolumn{2}{c||}{\small{\tt spread\_option}=2}&\multicolumn{2}{c|}{\small{\tt spread\_option}=1}&\multicolumn{2}{c}{\small{\tt spread\_option}=2}\\
    P.nr. &  HOP & LHOP&  HOP & LHOP&  HOP & LHOP&  HOP & LHOP\\
    \hline
 1 &    \bf 0.80 &    0.71  &   \bf  0.80 &    0.77  &    0.01 &    0.01  &    0.01 &    0.01  \\
 2 &    0.80 &   \bf  0.82  &    0.80 &  \bf  0.82  &    0.76 &    0.77  &    0.76 &    0.74  \\
 3 &    \bf 0.73 &    0.72  &   \bf  0.73 &    0.72  &    0.71 &    0.71  &    0.71 &    0.71  \\
 4 &    \bf 0.21 &    $<$0.01  &    $<$0.01 &    $<$0.01  &    $<$0.01 &    $<$0.01  &   $<$0.01 &    $<$0.01  \\
 5 &    \bf 0.66 &    0.30  &    0.64 &    0.47  &    0.55 &    0.38  &    0.55 &    0.47  \\
 6 &    \bf 0.98 &    0.97  &   \bf 0.98 &    0.97  &    0.84 &    0.73  &    0.84 &    0.52  \\
 7 &    0.48 &    0.48  &    0.48 &    0.48  &    0.68 &    0.63  &   \bf 0.71 &    0.24  \\
 8 & \bf $>$0.99 &    0.92  &    0.99 & \bf $>$0.99  & \bf $>$0.99 &    0.92  & \bf $>$0.99 &    0.92  \\
 9 &    0.89 &    0.99  &    0.89 & \bf $>$0.99  &    0.96 &    0.96  &    0.96 &    0.95  \\
10 & \bf $>$0.99 & \bf $>$0.99  & \bf $>$0.99 & \bf $>$0.99  & \bf $>$0.99 & \bf $>$0.99  & \bf $>$0.99 & \bf $>$0.99  \\
11 &    0.82 &    0.99  &    0.82 & \bf $>$0.99  &    0.99 &    0.97  &    0.99 &    0.98  \\
12 & \bf $>$0.99 & \bf $>$0.99  & \bf $>$0.99 & \bf $>$0.99  &    0.99 &    0.98  &    0.99 &    0.98  \\
13 & \bf 0.95 &    0.91  &    0.94 &    0.90  &    0.93 &    0.93  &    0.93 &    0.93  \\
14 & \bf  0.82 &    0.80  &  \bf  0.82 &    0.81  &  \bf  0.82 &    0.81  &  \bf  0.82 &  \bf  0.82  \\
15 &   \bf 0.96 &   \bf 0.96  & \bf   0.96 &  \bf  0.96  &    0.94 &    0.92  &  \bf  0.96 &    0.94  \\
16 & \bf $>$0.99 & \bf $>$0.99  & \bf $>$0.99 & \bf $>$0.99  &    0.41 &    0.39  &    0.41 &    0.41  \\
17 & \bf $>$0.99 & \bf $>$0.99  & \bf $>$0.99 & \bf $>$0.99  &    0.83 &    0.01  &    0.86 &    0.01  \\\hline    \end{tabular}
    \caption{Comparison of the methods with respect to the order $p$ of the models employed. For each problem, the largest hypervolumes found by the solvers are highlighted in bold.}
    \label{tab:placeholder_hv}
\end{table}

To assess the effect of the polynomial order $p$ on the performance of the proposed methods, we aggregate the hypervolume results in Table~\ref{tab:placeholder_hv}. As can be seen, the best hypervolume values are most frequently obtained when $p=1$.

This behavior can be explained by considering the dual task of multiobjective optimization. On the one hand, the goal is to identify Pareto stationary points; on the other hand, it is equally important to generate a well-distributed set of high-quality nondominated solutions that provides a good approximation of the true Pareto front. The latter aspect is effectively captured by the hypervolume indicator.
Consequently, when $p=1$, the methods encourage solution sets with better hypervolume values, even if this comes at the expense of stationarity. In contrast, increasing the polynomial order to $p=2$ shifts the balance toward satisfying the stationarity conditions more accurately. As will be shown in the following, this improvement in stationarity is accompanied by a less pronounced advantage in terms of hypervolume.

To evaluate the Pareto stationarity of the solutions produced by the algorithms, we collect, for each method, all the nondominated points generated at the final iteration. The empirical cumulative distribution function of the corresponding stationarity measures is then reported in Figure~\ref{fig1}.
More specifically, given a solver $s\in\cal S$, 
the cumulative distribution of stationarity measure of the nondominated points obtained by solver $s$ is
\[
d_s(\alpha) = \frac{1}{|{\cal F}_s|}|\{y\in{\cal F}_s:\ y=F(x), \|v(x)\|\leq\alpha\}|,
\]
where  
${\cal F}_s = \bigcup_{p\in{\cal P}} {\cal F}_{p,s}$ and $v(\cdot)$ is defined as in~\eqref{vx}. As shown in Figure~\ref{fig1}, when linear models (i.e., $p=1$) are employed, both variants of HOP are outperformed by LHOP with ${\tt spread\_option}=1$, while they outperform LHOP with ${\tt spread\_option}=2$. This behavior is consistent with the design of the algorithms. In particular, LHOP with ${\tt spread\_option}=2$ always selects the nondominated point with the largest stationarity measure, thereby explicitly prioritizing the reduction of the worst stationarity value. Although this strategy is intended to mimic the behavior of HOP, the available function evaluation budget is often exhausted before comparable stationarity levels can be achieved. Conversely, LHOP with ${\tt spread\_option}=1$ selects points according to the spread criterion, repeatedly improving the stationarity of the same subset of solutions and consequently producing a more favorable overall distribution of stationarity measures.

The advantage of HOP over LHOP becomes more evident when quadratic models (i.e., $p=2$) are employed. As illustrated in Figure~\ref{fig1}, both variants of HOP consistently outperform both variants of LHOP in terms of stationarity. This behavior is expected since, for $p=2$, both methods are designed to reproduce the behavior of Newton's method in the multiobjective optimization setting. The superior performance of HOP can be attributed to its more thorough refinement of the current nondominated set at each iteration, which enables a more effective reduction of the stationarity measure.

Summing up, we could say that HOP generally provides better hypervolume values than LHOP, while LHOP has a lower per-iteration cost.

\begin{figure}
\begin{center}
\includegraphics
[width=0.98\textwidth]{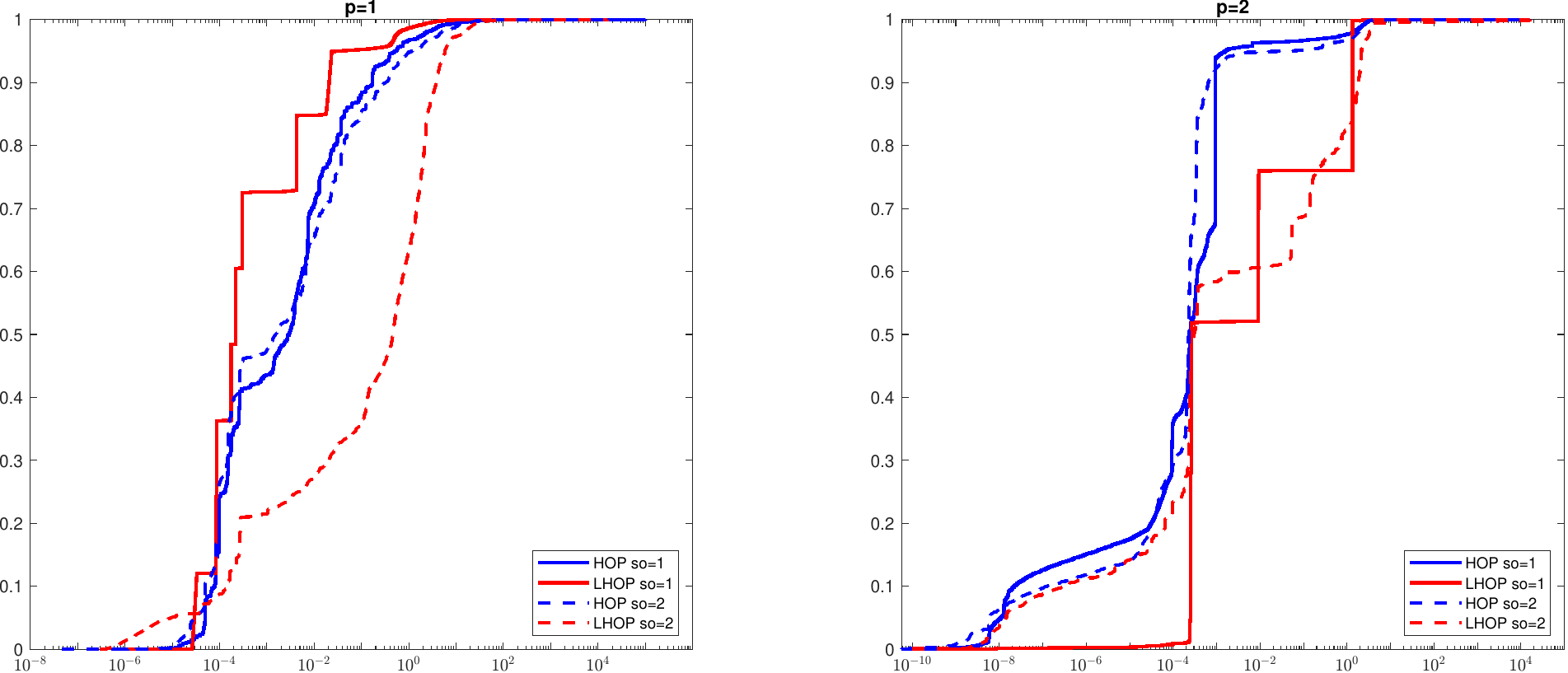}    
    \caption{Distribution of stationarity measure of the nondominated points obtained by the different variants of the algorithms HOP and LHOP. Left plot is for $p=1$, whereas right plot is for $p=2$.}
    \label{fig1}
\end{center}
\end{figure}

\section{Conclusions}\label{sec:conclusions}
In this paper, we have presented an algorithmic framework for Pareto front reconstruction in unconstrained multiobjective optimization, which generates a set of points using high-order regularized models.
At every iteration, the proposed scheme uses a search procedure where several trial points are computed starting from those included in the current set and, if some conditions are satisfied, the trial points can be added to the current set.
We have analyzed both the cases where the regularized models are solved exactly and inexactly, giving worst-case complexity bounds.
Then, a lighter version of the method has also been investigated where, at each iteration, the search procedure is applied only to one point of the current set rather than to all of them.
In the paper, we focused on the general nonconvex case, while the analysis for the convex or strongly convex case may be the subject of future research.

As a final remark, we would like to emphasize that our analysis does not rely on linked sequences, which are commonly used when dealing with a-posteriori algorithms for multiobjective problems (see, e.g., \cite{cocchi:2020,custodio:2021,liuzzi2016derivative,mohammadi2024trust}).
Instead, our analysis uses the fact that, whenever a point is added to the current set, we can relate the Pareto-stationary violation to the increase in the hypervolume of the set in the image space.
This extends a common technique used in the analysis of algorithms for single-objective optimization and algorithms for multiobjective optimization that generate a single Pareto-stationary point, where the stationary violation is typically related to the decrease in the objective function(s).

\clearpage
\bibliography{ref}

\end{document}